\documentclass[11pt]{article}

\usepackage{tikz}
\usepackage{subfigure}
\usepackage[english]{babel}

\usepackage[center]{caption2}
\usepackage{amsfonts,amssymb,amsmath,latexsym,amsthm}
\usepackage{multirow}
\usepackage[usenames,dvipsnames]{pstricks}
\usepackage{epsfig}
\usepackage{pst-grad} % For gradients
\usepackage{pst-plot} % For axes
\usepackage[space]{grffile} % For spaces in paths
\usepackage{etoolbox} % For spaces in paths
\makeatletter % For spaces in paths
\patchcmd\Gread@eps{\@inputcheck#1 }{\@inputcheck"#1"\relax}{}{}

\oddsidemargin 0pt
\evensidemargin 0pt
\marginparwidth 40pt
\topmargin 0pt
\headsep 20pt
\tolerance=1000
\textheight 8.8in
\textwidth 6.6in

\def\qedB{{\hfill\enspace\vrule height8pt depth0pt width8pt}}

\newtheorem{thm}{Theorem}%[section]

\newtheorem{lem}[thm]{Lemma}

\newtheorem{conj}[thm]{Conjecture}

\newtheorem{claim}{Claim}

\begin{document}
\title{\bf\Large A conjecture of Verstra\"ete on vertex-disjoint cycles}
\date{}

\author{
Jun Gao\footnote{Email: gj0211@mail.ustc.edu.cn.}~~~~~~~
Jie Ma\footnote{Email: jiema@ustc.edu.cn.
Research supported in part by NSFC grants 11501539 and 11622110.}\\
\medskip \\
School of Mathematical Sciences\\
University of Science and Technology of China\\
Hefei, Anhui 230026, China.
}

\maketitle

\begin{abstract}
Answering a question of H\"aggkvist and Scott, Verstra\"ete proved that every sufficiently large graph with average degree at least $k^2+19k+10$ contains $k$ vertex-disjoint cycles of consecutive even lengths.
He further conjectured that the same holds for every graph $G$ with average degree at least $k^2+3k+2$.
In this paper we prove this conjecture for $k\geq 19$ when $G$ is sufficiently large.
We also show that for any $\epsilon>0$ and large $k\geq k_\epsilon$, 
average degree at least $k^2+3k-2+\epsilon$ suffices,
which is asymptotically tight for infinitely many graphs.
\end{abstract}

\section{Introduction}
Throughout this paper, all graphs considered are simple and the word {\it disjoint} will always mean for {\it vertex-disjoint} unless otherwise specified.

A classic result of Corradi and Hajnal \cite{CH} says that for any $k\geq 2$, every graph of order at least $3k$ and minimum degree at least $2k$ contains $k$ disjoint cycles.
Thomassen \cite{T83} proved that for any $k\geq 2$, there exists some $n_k$ such that every graph of order at least $n_k$ and minimum degree at least $3k+1$ contains $k$ disjoint cycles of the same length.
He also conjectured in \cite{T83} that to assure the existence of $k$ disjoint cycles of the same length, it suffices for graphs of sufficiently large order and minimum degree at least $2k$
(the case $k=2$ was conjectured earlier by H\"aggkvist; see \cite{E96,T83}).
This was confirmed by Egawa \cite{E96} for $k\geq 3$ and later by Verstra\"ete \cite{V03} for $k\geq 2$.
In \cite{HS}, H\"aggkvist and Scott asked whether there exists a quadratic function $q(k)$ such that every graph with minimum degree at least $q(k)$ contains $k$ disjoint cycles of consecutive even lengths.
Verstra\"ete \cite{V02} answered this in the affirmative by proving that for any $k\geq 2$,
every graph of order at least $n_k = 16(k^2)!$ and average degree at least $k^2 +19k +10$ contains $k$ disjoint cycles of consecutive even lengths.
This is tight up to the $O(k)$ term.
He also made the following conjecture.

\begin{conj}[Verstra\"ete \cite{V02}]\label{conj}
Any graph of average degree at least $(k+2)(k+1)$ contains $k$ vertex-disjoint cycles of consecutive even lengths.
\end{conj}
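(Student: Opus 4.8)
The plan is to prove the conjecture for every $k\ge 19$ and every sufficiently large $G$, and, trading the small additive slack for an arbitrarily small $\epsilon$, the asymptotically sharp bound $k^2+3k-2+\epsilon$. The constraints are tight: $k$ disjoint cycles of consecutive even lengths need at least $4+6+\dots+(2k+2)=k^2+3k$ vertices, and in a bipartite-like graph at least $2+3+\dots+(k+1)=\tfrac12(k^2+3k)$ of them on the smaller side, so the complete graph $K_{k^2+3k-1}$ (average degree $k^2+3k-2$) and the complete bipartite graphs $K_{a,N}$ with $a=\tfrac12(k^2+3k)-1$ and $N\to\infty$ (average degree tending to $k^2+3k-2$) contain no such family. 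Hence any argument must be essentially lossless --- in particular the routine reduction to a subgraph of minimum degree $\tfrac12\, d(G)$ wastes a factor of two and is unaffordable --- and the heart of the matter is a tight structural analysis, after which two standard engines do the combinatorial work.

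The first engine produces cycle lengths: it is known (essentially due to Verstra\"ete) that a bipartite graph of minimum degree at least $t+1$ contains cycles of $t$ consecutive even lengths, the shortest of which has length comparable to the girth. Applied to a bipartite subgraph of minimum degree $\gg k$, the Moore bound forces girth $O(\log_k|V|)$, so the guaranteed run of $t\gg k$ consecutive even lengths contains a prescribed window of $k$ consecutive even values, which we fix as the target. The second engine produces disjointness by peeling: after extracting a cycle of the target lengths, the minimum degree of the remainder drops by at most the largest number of its vertices lying in a single neighbourhood, which is small whenever the extracted cycle is a shortest cycle inside a locally dense ball; re-localizing and repeating $k$ times gives $k$ disjoint cycles realizing the window. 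Together these yield the working lemma: a bipartite graph of minimum degree about $\tfrac12(k^2+3k)$ contains $k$ disjoint cycles of consecutive even lengths, and this is best possible up to lower-order terms, as $K_{\frac12(k^2+3k)-1,\,N}$ shows.

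The remaining step, which I expect to be the main obstacle, is to hand the engine a bipartite subgraph of minimum degree roughly $\tfrac12(k^2+3k)$ without paying the factor-of-two cost. I would argue by a dichotomy on the largest minimum degree $\delta^{*}$ among subgraphs of $G$, noting $\delta^{*}\ge\tfrac12(k^2+3k)+1$ always. If $\delta^{*}$ is comfortably large, say $\delta^{*}\ge k^2+3k$, then a maximum-cut bipartite subgraph of the witnessing subgraph already has minimum degree above $\tfrac12(k^2+3k)$ and the working lemma applies, the slack between $k^2+3k+2$ and $k^2+3k$ and the hypothesis $k\ge 19$ absorbing the lower-order losses. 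If $\delta^{*}$ is small, then $G$ is $\delta^{*}$-degenerate yet has average degree at least $k^2+3k+2\approx 2\delta^{*}$, which should force almost all edges to meet a small set $X$ with $|X|\ge\tfrac12(k^2+3k)$ dominating a huge outside in a bipartite-dense way, every used outside vertex having at least $k+1$ neighbours in $X$; one then draws the $k$ cycles with $i+1$ vertices of $X$ on the $i$-th cycle, which is the $K_{a,N}$ regime and uses $|X|\ge\tfrac12(k^2+3k)$ on the nose. The delicate points are to cover the intermediate range of $\delta^{*}$ so that one branch always applies, to extract $X$ with $|X|$ exactly at the threshold rather than a constant short --- precisely where $(k+1)(k+2)$ degrades to $k^2+3k-2+\epsilon$ under the weaker hypothesis --- and to verify $G[X]$ is connected enough that the cycles can be realized rather than merely counted. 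Modulo this, the rest is bookkeeping: pass to a $2$-connected subgraph of maximum average degree, run the dichotomy, invoke the working lemma in each branch, and use largeness of $G$ only to secure the outside in the second branch and to accommodate cycle lengths linear in $k$.
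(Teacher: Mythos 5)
There is a genuine gap, and it sits exactly where you predicted the main obstacle would be: the ``working lemma'' and the peeling argument offered for it. Producing $k$ \emph{disjoint} cycles whose lengths all lie in one common window of $k$ consecutive even values is the whole difficulty, and ``re-localizing and repeating $k$ times'' does not deliver it. Each application of the consecutive-even-lengths engine yields a run starting at roughly the girth of the \emph{current} graph, and after deleting the vertices of one extracted family the girth of the remainder can increase, so the successive windows need not overlap; nothing in your sketch pins the window down. The minimum-degree bookkeeping does not rescue this: a single vertex can be adjacent to $\Theta(k)$ vertices of an extracted cycle of length $\Theta(k)$, so over $k$ rounds the loss can be $\Theta(k^2)$, i.e.\ the entire degree budget. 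Moreover the working lemma itself --- bipartite minimum degree about $\frac12(k^2+3k)$ forces $k$ disjoint cycles of consecutive even lengths --- is a minimum-degree statement strictly stronger than anything the paper proves or that exists in the literature; it cannot be invoked as ``essentially known.'' The paper's actual resolution is different: it runs the repeated-extraction process, notes that if $k$ of the starting half-lengths $r_i$ ever coincide it is done, and otherwise the process halts after at most $O(k\log n)$ rounds, so the deleted set $V_2$ has polylogarithmic size. The real work is then a density analysis of the partition $(V_1,V_2)$ via K\H{o}v\'ari--S\'os--Tur\'an, whose key new ingredient is that vertices of $V_2$ with $(1-o(1))n$ neighbours in $V_1$ can each anchor a long even cycle while consuming only one (or very few) vertices of $V_2$; this is what brings the cost down to $\frac12(k^2+3k)$ per configuration rather than $k^2+3k$.

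The second branch of your dichotomy is also unsound as stated. A $\delta^{*}$-degenerate graph with average degree close to $2\delta^{*}$ need not have almost all edges meeting a small set $X$: for instance, $\delta^{*}$-trees (each new vertex joined to an existing $\delta^{*}$-clique) have average degree $2\delta^{*}-o(1)$ and admit no small set dominating the edge set, so degeneracy alone yields no such global bipartite-like structure. Your extremal examples, your diagnosis that the factor-of-two reduction to a min-degree subgraph is unaffordable, and the two ``engines'' (girth-based consecutive even cycles, and $K_{s,s}$ extraction) are all genuinely in the spirit of the paper, but the proposal does not close either branch of the argument.
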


In this paper, we prove this conjecture for $k\geq 19$ when the graph is sufficiently large.

\begin{thm}\label{THM: main}
Let $k$ be an integer at least 19 and let $G$ be a graph of order at least $n_k=2^{32k^3}$ and average degree at least $(k+2)(k+1)$.
Then $G$ contains $k$ disjoint cycles of consecutive even lengths.
\end{thm}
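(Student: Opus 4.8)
The plan is to follow the familiar route for ``average degree forces a substructure'' theorems: clean $G$ up to a subgraph of large minimum degree, and then split according to whether that subgraph resembles the extremal configuration. First I would pass to a subgraph $H\subseteq G$ with $\delta(H)\ge D:=\tfrac12(k+2)(k+1)$, obtained by repeatedly deleting vertices of current degree less than $D$; bounding the deleted edges against $e(G)\ge D\,|V(G)|$ gives $D\,|V(H)|<\binom{|V(H)|}{2}$, hence $|V(H)|>(k+2)(k+1)+1>k^2+3k$. Since any $k$ disjoint cycles of consecutive even lengths have total length at least $4+6+\cdots+(2k+2)=k^2+3k$, the subgraph $H$ always has just barely enough room; conversely $K_{t,N}$ with $t=\tfrac12(k^2+3k)-1$ has average degree tending to $(k+2)(k+1)-4$ and no such family (a $2\ell$-cycle uses $\ell$ vertices of the small side and $\sum_{j=2}^{k+1}j=\tfrac12(k^2+3k)>t$), so the argument must behave correctly whenever $H$ looks like a complete bipartite graph with a small side, and the only slack available there is the small constant gap between $(k+2)(k+1)$ and $(k+2)(k+1)-4$.

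The core of the proof should be a structural dichotomy. If $H$ is, up to a bounded perturbation, a complete bipartite graph $K_{X,Y}$ with $Y$ independent and $|Y|$ huge (or, similarly, a complete-like graph, or a disjoint union of such pieces), then $\delta(H)\ge D$ pins the small side down to $|X|\ge\tfrac12(k^2+3k)$; one then reserves pairwise disjoint sets $X_2,\dots,X_{k+1}\subseteq X$ with $|X_j|=j$ and builds, for each $j$, a $2j$-cycle alternating between $X_j$ and $j$ freshly chosen vertices of $Y$ (common neighbours of consecutive pairs, which exist since $|Y|$ is enormous and the $X$-degrees are large), obtaining the $k$ cycles directly; when $H$ is a union of pieces one instead distributes the target lengths $4,6,\dots,2k+2$ among the pieces, which is possible because these numbers are small and their subset sums are dense. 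If, on the other hand, $H$ is genuinely far from this picture, I would use a DFS tree (or the high connectivity) to extract a \emph{long cycle with $k$ length-adjusters}: a cycle $Z$ carrying $k$ pairwise far-apart chords, the $i$-th chord together with its length-$3$ arc of $Z$ forming a $C_4$, so that rerouting $Z$ across that chord deletes exactly two vertices, independently of the others; then $Z$ together with its chords contains a cycle of every even length in $[\,|Z|-2(k-1),\,|Z|\,]$. Pulling out $k$ such systems on pairwise disjoint vertex sets (each of size about $|V(H)|^{1-o(1)}$), all of a common maximal even length $2a+2k$ with $a$ large, and then realising the target cycle $C_i$ inside the $i$-th system by rerouting exactly $k-i$ of its chords, yields $k$ disjoint cycles of lengths $2a+2,\dots,2a+2k$.

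The main obstacle is the dichotomy itself. On the structured side one needs a tight stability statement — that a minimum-degree-$D$ graph which is not essentially complete bipartite (or complete) fails to be extremal only by a bounded amount, so that $|X|\ge\tfrac12(k^2+3k)$ can be forced — and this is exactly where the narrow gap in the degree hypothesis is spent. On the other side one must show that every remaining $H$ really does contain a long cycle with $k$ independent adjusters (or an equivalent flexible gadget), which is a genuine extremal‑graph‑theoretic task. Finally, the tower-type bound $|V(G)|\ge 2^{32k^3}$ is the price of the iterated extraction: each time one locates a long, sufficiently connected or expanding substructure and repeats the search on what remains, the number of vertices required blows up, and this must be carried out on the order of $k$ (or $k^2$) times.
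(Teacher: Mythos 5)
Your proposal is a plan rather than a proof: the two steps that carry all the weight --- the stability dichotomy (``either $H$ is essentially complete bipartite with a small side, or it is far from that'') and, on the unstructured side, the extraction of $k$ disjoint ``long cycles with $k$ length-adjusters'' --- are exactly the hard content, and you flag them yourself as obstacles without supplying arguments. Worse, the specific gadget you propose cannot work as stated: a chord whose two endpoints span a length-$3$ arc of the host cycle creates a $C_4$, so such adjusters simply do not exist in graphs of girth at least $5$, and graphs of large girth and average degree $(k+1)(k+2)$ are precisely the instances where the bipartite-stability branch gives nothing either. To produce $k$ consecutive even cycle lengths in sparse-but-expanding graphs one needs the Bondy--Simonovits mechanism (a long cycle with a chord yields $A$--$B$ paths of \emph{every} length across a nontrivial partition, Lemmas~\ref{A-B path} and~\ref{cycle with chord}), not $C_4$-chords. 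A further gap: passing to a subgraph $H$ with $\delta(H)\ge\tfrac12(k+2)(k+1)$ discards the hypothesis that $G$ has order at least $n_k$; $H$ may have only $O(k^2)$ vertices, yet your construction repeatedly invokes ``$|Y|$ enormous'' and disjoint systems of size $|V(H)|^{1-o(1)}$.

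For contrast, the paper does not attempt a stability dichotomy on a minimum-degree subgraph. It iteratively applies Lemma~\ref{LEM:main} (BFS layers plus Lemma~\ref{A-B path}) to peel off bounded-length families of $k$ consecutive even cycles, producing a partition $V(G)=V_1\cup V_2$ with $|V_2|=o(n)$ and $G[V_1]$ of average degree at most $8k+1$; a pigeonhole on the starting lengths $r_i$ handles the case where two peeled families can be combined into disjoint cycles. The endgame is then pure edge-counting: either $e(V_1,V_2)$ is large enough that Lemma~\ref{LEM:Kss} yields a $K_{s,s}$ with $s=\tfrac12(k^2+3k)$ (which contains the $k$ cycles of lengths $4,\dots,2k+2$ outright), or one exploits vertices of $V_2$ with almost-full neighbourhoods in $V_1$ to build the longer cycles one at a time, each consuming few $V_2$-vertices, and finishes the shorter lengths with another $K_{s,s}$. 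Your proposal does not reach any of these quantitative trade-offs, so as it stands it does not constitute a proof.
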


Let $s=\frac{1}{2}(k^2+3k)$.
We now observe that for all $n$, the complete bipartite graph $K_{s-1,n-s+1}$ does not contain $k$ disjoint cycles of consecutive even lengths,
while its average degree equals $2(s-1)(n-s+1)/n=k^2+3k-2-\epsilon_n$, where $\epsilon_n>0$ goes to zero as $n$ goes to infinity.
This shows that for any positive real number $d<k^2+3k-2$, average degree at least $d$ cannot force the existence of such $k$ disjoint cycles.
Being an asymptotic result, we prove that in contrast of the above example, average degree at least $k^2+3k-2+\epsilon$ will suffice.

\begin{thm}\label{THM:largek}
For every $\epsilon >0$, there exists $k_\epsilon$ such that the following holds for any $k\geq k_\epsilon$.
If $G$ is a graph of order at least $n_k$ and average degree at least $k^2+3k-2+\epsilon$,
then $G$ contains $k$ disjoint cycles of consecutive even lengths.
\end{thm}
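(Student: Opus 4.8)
Throughout write $s=\tfrac12(k^2+3k)$, and let the target family be $k$ disjoint cycles of lengths $4,6,\dots,2k+2$; these lengths sum to $k^2+3k$, so a bipartite realisation needs exactly $s$ vertices on each side, and indeed $K_{s,s}$ already contains such a family (split each side into blocks of sizes $2,3,\dots,k+1$ and take a Hamilton cycle of the $K_{i+1,i+1}$ spanned by the $i$-th blocks). So it will be enough to locate inside $G$ a subgraph close enough to $K_{s,s}$ --- or, more generally, the robust bipartite-type structure that the proof of Theorem~\ref{THM: main} uses to read off such cycles.

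The plan is to revisit that proof and show that, once $k\ge k_\epsilon$, every appeal there to ``average degree $\ge(k+2)(k+1)$'' can be weakened to ``average degree $\ge k^2+3k-2+\epsilon$''. A first, purely bookkeeping reduction: by Theorem~\ref{THM: main} we may assume the average degree lies in $[\,k^2+3k-2+\epsilon,\ k^2+3k+2\,)$, so $e(G)\ge(s-1+\tfrac{\epsilon}{2})n$. Repeatedly deleting vertices of degree $<s-1$ removes fewer than $s-1$ edges each time, so the process stops at a graph $G'$ with $\delta(G')\ge s-1$ and $e(G')\ge(s-1)\,v(G')+\tfrac{\epsilon}{2}n$, i.e.\ with average degree still at least $k^2+3k-2+\epsilon$; and $e(G')\le\binom{v(G')}{2}$ forces $v(G')>\sqrt{\epsilon n}$, which is still large enough for the argument (we may take the threshold $n_k$ in the statement to be as large a function of $k$ as needed). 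So from now on we may assume $\delta(G)\ge s-1$.

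Next, a counting step produces a core. Put $X=\{v:\deg_G(v)\ge s\}$. Since $\sum_{v\notin X}\deg_G(v)\le(s-1)(n-|X|)$ while $\sum_v\deg_G(v)\ge(2s-2+\epsilon)n$, we get $|X|\,(n-s)\ge(s-1+\epsilon)n$, and hence $|X|\ge s$ (note $K_{s-1,n-s+1}$ has only $s-1$ vertices of degree $\ge s$, consistent with its failing the hypothesis). Now split on how the degrees inside $X$ are distributed. If some $s$-subset of $X$ consists of vertices of degree $\ge n-\tfrac{n}{2s}$, their common neighbourhood has size $\ge n/2$, so $G\supseteq K_{s,\lceil n/2\rceil}$ and we are done; the same conclusion follows whenever we can find an $s$-set with at least $s$ common neighbours, or more generally a near-complete bipartite pair with both sides of size $\ge s$ (which is turned into the required cycles exactly as in Theorem~\ref{THM: main}). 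Thus the whole difficulty is confined to the regime where no such near-universal $s$-set exists --- equivalently, the excess $e(G)-(s-1)n\ge\tfrac{\epsilon}{2}n$ is carried by vertices of only moderate degree. There I would follow the structural analysis in the proof of Theorem~\ref{THM: main}, passing to successively denser pieces (each still of order $\ge n_k$) until one is reached that is essentially complete bipartite with small side of size $\ge s$; the only thing to track is that each extraction, recursion, and cycle-realisation step costs a lower-order amount of average degree --- of order $k^2/(\text{piece size})$, $s/|B|$, and the like --- all below $\tfrac{\epsilon}{2}$ once $k\ge k_\epsilon$ and $n\ge n_k$.

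The hard part is exactly this last bookkeeping in the near-extremal case. In Theorem~\ref{THM: main} the additive slack $+4$ over the conjectured limiting value $k^2+3k-2$ is what absorbs all the cumulative constant losses coming from the extremal and near-extremal branches of the argument, and absorbing them uniformly for \emph{every} fixed $k\ge 19$ is what forces $+4$ rather than $+\epsilon$. The point of Theorem~\ref{THM:largek} is that each of these losses is in fact $o_k(1)$ relative to the average degree, so the whole slack can be taken to be $\epsilon$ provided $k$ is large enough. Establishing this requires re-running the case analysis of Theorem~\ref{THM: main} with every constant estimate replaced by its large-$k$ asymptotics and checking that the accumulated error never exceeds $\epsilon$ when $k\ge k_\epsilon$; that verification, rather than any new structural idea, is the bulk of the proof.
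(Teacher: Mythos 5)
There is a genuine gap: what you have written is a plan that defers the entire difficult part of the proof rather than carrying it out. You say explicitly that the bulk of the proof is ``re-running the case analysis of Theorem~\ref{THM: main} with every constant estimate replaced by its large-$k$ asymptotics and checking that the accumulated error never exceeds $\epsilon$,'' and then you do not perform that verification. More importantly, the premise that this is mere bookkeeping is false: the paper's proof of Theorem~\ref{THM:largek} needs a new structural step that is absent from your outline. After the partition $V(G)=V_1\cup V_2$, the regime $\tfrac{\epsilon}{4}n\le e(G[V_1])\le\tfrac78 n$ is exactly where the hypothesis $e(G)\ge(s-1+\tfrac{\epsilon}{2})n$ is too weak to extract $K_{s,s}$ from $(V_1,V_2)$ directly (Lemma~\ref{LEM:Kss} with $\delta=\epsilon/8$ works only when $e(G[V_1])<\tfrac{\epsilon}{4}n$, since up to $\tfrac78 n$ edges inside $V_1$ would otherwise be wasted). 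The paper resolves this by spending those $\Omega(\epsilon n)$ edges of $G[V_1]$ to build the longest one or several of the $k$ cycles using paths that run mostly \emph{inside} $V_1$ (the common-neighbourhood sets $A_{i,j}$, the type~I/II pair dichotomy, and the cycle through $\le\lfloor\tfrac{2k+2}{3}\rfloor+2$ vertices of $V_2$ when $|M|\ge 2k+1$, versus the $K_{r,r}$ extraction when $|M|\le 2k$), thereby reducing how many edges of $(V_1,V_2)$ the remaining cycles must consume. That construction, not an asymptotic re-estimate of constants, is what makes the slack $\epsilon$ suffice; nothing in your outline supplies it.

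The concrete steps you do provide also do not lead anywhere strong enough. The reduction to $\delta(G)\ge s-1$ and the count $|X|\ge s$ for $X=\{v:\deg_G(v)\ge s\}$ give you only one vertex more than the extremal example $K_{s-1,n-s+1}$ exhibits, and your ensuing dichotomy (``some $s$-subset of $X$ has degree $\ge n-\tfrac{n}{2s}$, else the excess is carried by vertices of moderate degree'') is nowhere near a partition into tractable cases: a graph can have $|X|=s$ with all degrees in $X$ around $2s$, and then neither branch of your dichotomy produces the cycles. Finally, note that the paper cannot simply ``assume the average degree lies in $[k^2+3k-2+\epsilon,\ k^2+3k+2)$'' and reuse Theorem~\ref{THM: main} as a black box for the upper range, because Theorem~\ref{THM: main} is stated for $k\ge 19$ with threshold $(k+1)(k+2)=k^2+3k+2$, and the relevant reuse in the paper is of the \emph{internal} case analysis (via the footnotes weakening $e(G)\ge\tfrac12(k^2+3k+2)n$ to $e(G)\ge\tfrac12(k^2+3k-2)n$ for $k\ge 150$ in the branch $e(G[V_1])>\tfrac78 n$), which again is a step you would need to verify rather than assert.
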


We define some notations.
Let $G$ be a graph.
For $S\subseteq V(G)$, let $G[S]$ be the subgraph of $G$ induced on the vertex set $S$.
Let $A,B\subseteq V(G)$ be disjoint sets.
We denote $(A,B)$ to be the set of edges between $A$ and $B$ and $e(A,B) = |(A,B)|$.
Let $G(A,B)$ be the bipartite subgraph of $G$ spanned by $(A,B)$ and $G[A,B)$ be the subgraph of $G$ spanned by $(A,B)\cup E(G[A])$.
An $A$-$B$ path means a path with one endpoint in $A$ and other in $B$.
We also write $[t]:=\{1,2,...,t\}$ for any integer $t\geq 1$.

\medskip

The rest of the paper is organized as follows.
In Section 2, we collect and establish some lemmas.
We then prove Theorem \ref{THM: main} and Theorem \ref{THM:largek} in Sections 3 and 4, respectively
(for a sketch of the proofs, we direct readers to the beginning of Section 3).
In Section 5, we provide a weaker bound for the case $k=2$ and conclude the paper by a question.

%%%%%%%%%%%%%%%%%%%%%%%%%%%%%%%%%%%%%%%%%%%%%%

\section{Preliminaries}
In this section we prepare some lemmas for the coming sections.
The first lemma is a user-friendly weaker form of the classic theorem of K\H{o}v\'ari-S\'os-Tur\'an \cite{KST} (also see Lemma 4 in \cite{V02}).

\begin{lem}\label{LEM:Kss}
Let $\delta>0$ be any real, $s$ be any natural number and $G$ be a bipartite graph with bipartition $(A,B)$.
If $e(G)\geq (s-1+\delta)|A|$ and $\delta |A|\geq |B|^{s}$, then $G$ contains a copy of $K_{s,s}$.
\end{lem}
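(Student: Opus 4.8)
The plan is to prove Lemma~\ref{LEM:Kss} by a standard double-counting argument on "cherries" (paths of length two with both endpoints in $B$), in the spirit of the Kővári–Sós–Turán theorem, but keeping careful track of the hypotheses $e(G) \geq (s-1+\delta)|A|$ and $\delta|A| \geq |B|^s$ so that the conclusion comes out cleanly. First I would, for each vertex $a \in A$, consider the set $N(a) \subseteq B$ of its neighbours and count the number of $s$-subsets of $N(a)$, namely $\binom{d(a)}{s}$. Summing over all $a \in A$ counts pairs $(a, T)$ where $T$ is an $s$-subset of $B$ lying in the neighbourhood of $a$. If this total exceeds $(s-1)\binom{|B|}{s}$, then by pigeonhole some fixed $s$-set $T \subseteq B$ is contained in the neighbourhoods of at least $s$ vertices of $A$, and those $s$ vertices together with $T$ form the desired $K_{s,s}$.

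So the heart of the matter is the inequality
\[
\sum_{a \in A} \binom{d(a)}{s} > (s-1)\binom{|B|}{s}.
\]
By convexity of the function $x \mapsto \binom{x}{s}$ (extended to be $0$ for $x < s$) and Jensen's inequality, $\sum_{a\in A}\binom{d(a)}{s} \geq |A|\binom{\bar d}{s}$ where $\bar d = e(G)/|A| \geq s-1+\delta$. Here one must be slightly careful since $\binom{x}{s}$ is only convex on $x \geq s-1$; the clean way is to argue that we may assume every $d(a) \geq s-1$ (vertices of smaller degree contribute nothing and can be discarded, and discarding them only helps because it does not decrease the average degree below $s-1+\delta$ — actually it could, so instead I would note $\binom{x}{s}$ is convex on all of $\mathbb{R}$ when we define it via the polynomial $\frac{x(x-1)\cdots(x-s+1)}{s!}$ only on $x\ge s-1$, and handle the low-degree vertices by the observation that removing them preserves $e(G) \geq (s-1+\delta)|A|$ is false in general — the safest route is to simply delete vertices $a$ with $d(a)\le s-1$ and re-examine). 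A cleaner standard trick: use the bound $\binom{d(a)}{s} \geq \left(\frac{d(a)-s+1}{1}\right)\cdot\frac{1}{s!}\cdot(\text{stuff})$ is messy; instead I would invoke convexity after the reduction, or directly estimate
\[
|A|\binom{s-1+\delta}{s} \;=\; |A|\cdot\frac{\delta(\delta+1)\cdots(\delta+s-1)}{s!} \;\ge\; |A|\cdot\frac{\delta}{s!}.
\]
Then it suffices to show $\dfrac{\delta|A|}{s!} > (s-1)\binom{|B|}{s}$, and since $\binom{|B|}{s} \leq \dfrac{|B|^s}{s!}$, this follows from $\delta|A| > (s-1)|B|^s$. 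The given hypothesis is $\delta|A| \ge |B|^s$, which gives the non-strict version; to get strict inequality (and hence the $K_{s,s}$) one either assumes $s\ge 2$ so that a small slack appears, or more carefully tracks that $\binom{|B|}{s} < |B|^s/s!$ strictly when $s \ge 2$, or notes $\binom{s-1+\delta}{s} > \delta/s!$ strictly. I would reconcile the constant $(s-1)$ versus $1$ by being more precise: we actually need $|A|\binom{\bar d}{s} > (s-1)\binom{|B|}{s}$, and since $\binom{\bar d}{s}\ge \binom{s-1+\delta}{s}$, and one can check $\binom{s-1+\delta}{s}\ge \delta\binom{s-1}{s-1}\cdot(\text{something})$ — the point is that the product $\prod_{i=0}^{s-1}(\delta+i)/s!$ beats $(s-1)\cdot|B|^s/(s!\,|A|)$ under $\delta|A|\ge|B|^s$ because $\prod_{i=1}^{s-1}(\delta+i)/(s-1)! \ge \prod_{i=1}^{s-1} i /(s-1)!\cdot(\text{no, need} \ge s-1)$; since $\prod_{i=1}^{s-1}(\delta+i)\ge (s-1)!$ and we need a factor $s-1$, note $\prod_{i=1}^{s-1}(\delta+i) \ge \prod_{i=1}^{s-1}(1+i) = s!/1 \ge (s-1)(s-1)!$ for $s\ge 2$. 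Combining, $|A|\binom{\bar d}{s}\ge |A|\cdot\delta\cdot\frac{(s-1)(s-1)!}{s!} = (s-1)\cdot\frac{\delta|A|}{s} \cdot\frac{1}{1}$, hmm, this needs $\delta|A|/s \ge |B|^s/s!$, slightly different; I would simply carry the exact polynomial and verify the inequality holds, possibly requiring the harmless assumption $\delta\le 1$ WLOG or a clean rounding.

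**The main obstacle** I anticipate is purely bookkeeping: making the convexity step legitimate in the presence of low-degree vertices, and squeezing the exact constant so that $\delta|A|\ge|B|^s$ (rather than something like $2\delta|A|\ge |B|^s$ or $(s-1)$ appearing) is genuinely enough to force a copy rather than merely the expected count of one. The resolution is to (i) discard all $a\in A$ with $d(a)\le s-1$, observe that since $e(G)\ge(s-1+\delta)|A|$ the remaining edges still number at least $\delta|A|$ plus $(s-1)$ times the number of surviving vertices, so the surviving bipartite graph $G'$ on $(A',B)$ satisfies $e(G') \ge (s-1)|A'| + \delta|A|$, hence $\sum_{a\in A'}\binom{d(a)}{s}$ can be bounded below using convexity on the valid range $[s-1,\infty)$ together with the "excess" $\delta|A|$ of edges; and (ii) use $\binom{|B|}{s}\le |B|^s/s!$ together with the hypothesis to finish. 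If a constant-factor loss is unavoidable I would instead prove the lemma with $\delta|A|\ge|B|^s$ replaced by whatever is true and check it is still strong enough for all later applications, but I expect the stated form to be provable by keeping the $\binom{s-1+\delta}{s}$ term intact rather than crudely lower-bounding it by $\delta/s!$. Since this is a known result (the excerpt points to Lemma~4 of \cite{V02}), I would ultimately either cite that or reproduce this short argument in two or three lines.
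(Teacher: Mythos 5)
Your framework is exactly the paper's: count stars $K_{1,s}$ centred in $A$, note that if their number exceeds $(s-1)\binom{|B|}{s}$ then some $s$-subset of $B$ lies in $s$ common neighbourhoods, and reduce everything to the single inequality $\sum_{a\in A}\binom{d(a)}{s}>(s-1)\binom{|B|}{s}$. But you never actually establish that inequality from the stated hypotheses, and this is the entire content of the lemma. Your attempts to do so contain real errors: the step $\prod_{i=1}^{s-1}(\delta+i)\ge\prod_{i=1}^{s-1}(1+i)$ silently assumes $\delta\ge 1$, whereas the lemma must hold for all $\delta>0$ and is later applied with $\delta=\tfrac12$; the crude bound $\binom{s-1+\delta}{s}\ge\delta/s!$ leads you to need $\delta|A|>(s-1)|B|^s$, which is strictly stronger than the hypothesis for $s\ge 3$; and you end by conceding you might have to weaken the hypothesis to ``whatever is true.'' As written, this is a gap, not a proof.

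The missing idea is to run the convexity argument discretely rather than through Jensen on the polynomial $\binom{x}{s}$. Since $d\mapsto\binom{d}{s}$ is convex on the non-negative integers, subject to $\sum_a d(a)\ge(s-1+\delta)|A|$ (with $\delta\le 1$, as one may assume) the sum $\sum_a\binom{d(a)}{s}$ is minimized when $(1-\delta)|A|$ vertices have degree $s-1$ (contributing $0$) and $\delta|A|$ vertices have degree $s$ (each contributing $1$); hence $\sum_a\binom{d(a)}{s}\ge\delta|A|\ge|B|^s>(s-1)\binom{|B|}{s}$, the last step because $(s-1)\binom{|B|}{s}\le(s-1)|B|^s/s!\le|B|^s/s$. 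This avoids every factorial loss and is why $\delta|A|\ge|B|^s$ suffices verbatim. Your Jensen route can in fact be salvaged --- after replacing each $d(a)$ by $\max\{d(a),s-1\}$ one gets $\sum_a\binom{d(a)}{s}\ge|A|\binom{s-1+\delta}{s}>\delta|A|/s\ge|B|^s/s\ge(s-1)\binom{|B|}{s}$, pairing the $1/s$ you lose on the left with the $1/s$ you gain on the right --- but you did not make this pairing, and without it the constants do not close.
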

\begin{proof}
Suppose that $G$ doesn't contain $K_{s,s}$. Let $N$ denote the number of stars $K_{1,s}$ in $G$ with centers in $A$.
By the standard double-counting argument, we have
\begin{align*}
|B|^s>(s-1)\binom{|B|}{s}\geq N=\sum_{v\in A} \binom{d_G(v)}{s}\geq \delta |A|\geq |B|^s,
\end{align*}
where the second last inequality holds,
because under the condition $e(G)\geq (s-1+\delta)|A|$ and by convexity,
$\sum_{v\in A} \binom{d_G(v)}{s}$ is minimized when $(1-\delta)|A|$ vertices in $A$ have degree $s-1$ and other vertices in $A$ have degree $s$.
This contradiction completes the proof.
\end{proof}

The coming useful lemma can be found implicitly in \cite{BS} and explicitly in \cite{V00}.

\begin{lem}[\cite{BS,V00}]\label{A-B path}
Let $H$ be a graph comprising a cycle with a chord. Let $(A,B)$ be a non-trivial partition of $V(H)$.
Then $H$ contains $A$-$B$ paths of every length less than $|H|$, unless $H$ is bipartite with bipartition $(A,B)$.
\end{lem}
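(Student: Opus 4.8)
The plan is to write $H$ as a Hamilton cycle $C=v_0v_1\cdots v_{n-1}v_0$, where $n=|H|$, together with one extra edge, the chord, which after relabelling we take to be $v_0v_a$ with $2\le a\le n-a$. The chord cuts $C$ into arcs of lengths $a$ and $n-a$, and $H$ is bipartite precisely when $a$ is odd and $n$ even, in which case the unique $2$-colouring of $H$ is by parity of index. In this exceptional case, if $(A,B)$ is that colouring then every $A$--$B$ path has odd length, so the even length $2<n$ is not attained; this shows both that the hypothesis is needed and that the chord does genuine work (for the chordless cycle $C_6$ with the colouring $AABAAB$ there is no $A$--$B$ path of length $3$). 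So assume from now on that we are not in the exceptional case, fix $\ell\in\{1,\dots,n-1\}$, and seek an $A$--$B$ path of length $\ell$.

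\emph{Sub-arcs of $C$.} If some length-$\ell$ sub-arc $v_tv_{t+1}\cdots v_{t+\ell}$ of $C$ has its two ends in different parts, we are done. Otherwise each of $A,B$ is a union of cosets of $\langle\ell\rangle$ in $\mathbb Z_n$; as $(A,B)$ is non-trivial this forces $d:=\gcd(\ell,n)\ge 2$ and shows that the induced colouring $\chi\colon\mathbb Z_d\to\{A,B\}$, under which $v_i$ has colour $\chi(i\bmod d)$, is non-constant. In particular every $\ell$ coprime to $n$ is already handled --- so, e.g., $\ell=1$ and $\ell=n-1$ --- and we may assume $d\ge 2$ and $\chi$ non-constant.

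\emph{Routing through the chord.} Every chord-using path of $H$ has the form $R_1+v_0v_a+R_2$, where $R_1,R_2$ are sub-arcs of $C$ ending at $v_0$ and at $v_a$ respectively, neither containing the other chord-end, and $R_1\cap R_2=\emptyset$; there are four cases, according to the directions in which the two arms leave $v_0$ and $v_a$. Writing $r_i=|E(R_i)|$, so that $r_1+r_2=\ell-1\equiv-1\pmod d$, the indices modulo $d$ of the two ends of such a path are $\pm r_1$ and $a\pm r_2$; hence the four cases realise endpoint-residue pairs of, respectively, ``sum $\equiv a-1$'', ``sum $\equiv a+1$'', ``difference $\equiv a-1$'', ``difference $\equiv a+1$'' modulo $d$. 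Moreover the ``difference $\equiv a-1$'' family exists for all $\ell\le n-a$ and realises \emph{every} pair $\{s,s+(a-1)\}$ (as $r_1$ runs over the $\ell\ge d$ admissible values), the ``difference $\equiv a+1$'' family does the same for $\ell\le a$, and the two ``sum'' families always exist and sweep out all of $\mathbb Z_d$ unless $a<d$ and $a<\ell\le n-a$, in which case one first uses the ``difference $\equiv a-1$'' family to see that $\chi$ is periodic modulo $g:=\gcd(a-1,d)<d$ and then works modulo $g$ instead. If one of the available, fully sweeping families contains a pair $\{s,s'\}$ with $\chi(s)\ne\chi(s')$, that family yields the required $A$--$B$ path. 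Suppose none does. Then $\chi$ is invariant under the symmetry of $\mathbb Z_d$ attached to each available family --- translation by $a-1$ or by $a+1$, or one of the reflections $x\mapsto(a\pm1)-x$ --- and, combining the symmetries that occur (using $\gcd(a-1,a+1)\mid 2$, so that two such translations generate a subgroup of $\langle\gcd(2,d)\rangle$ and two such reflections compose to translation by $2$), non-constancy of $\chi$ forces $d$ to be even and $\chi$ to be, modulo $d$ (or modulo $g$), the parity colouring. But then $\chi$ is the parity colouring of the cycle $C$ (which is proper, as $n$ is even), and either $a$ is odd --- so $\chi$ properly colours the chord $v_0v_a$ too and $H$ is bipartite with bipartition $(A,B)$, the excluded case --- or $a$ is even, whence $x\mapsto(a-1)-x$ reverses parity and the fully sweeping ``sum $\equiv a-1$'' family was bichromatic after all, a contradiction. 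This completes the argument.

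\emph{Where the work is.} The group-theoretic core above is short; the real work is the bookkeeping it rests on, namely deciding, for the given $\ell$, which of the four chord-path families are available and sweep all (relevant) residues. This depends on how $\ell$ compares with $a$ and $n-a$ and on divisibility relations between $a\pm1$ and $d$, so the verification splits into several cases; and a few boundary configurations --- lengths $\ell$ very close to $n$, the case $\ell=d$, and the smallest theta graphs, in particular $\Theta_{1,2,2}=K_4-e$, which is non-bipartite --- are cleanest to dispatch separately. I expect making this case analysis genuinely exhaustive to be the main obstacle.
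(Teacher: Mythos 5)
The paper does not prove this lemma at all --- it is quoted from Bondy--Simonovits \cite{BS} and Verstra\"ete \cite{V00} --- so there is no in-paper argument to compare yours against; you are on your own here, and your route (reducing the colouring to a map $\chi\colon\mathbb Z_d\to\{A,B\}$ and analysing the four chord-path families by their endpoint residues) is quite different from the direct combinatorial arguments in those sources. Judged on its own terms, the skeleton is sound and, as far as I can verify, every precise claim you state is true: the four families do realise exactly the pairs with sum or difference $\equiv a\pm1\pmod d$; the availability and sweeping conditions are as you say (for $\ell\le a$ the relevant parameter runs over $\ell\ge d$ values, for $\ell>n-a$ over $n-\ell\ge d$ values since $d\mid n-\ell$, for $a<\ell\le n-a$ over $a$ values, whence the single exception $a<d$); and in each of the four resulting regimes ($\ell\le a$; $a<\ell\le n-a$ with $a\ge d$; the same with $a<d$, reduced mod $g=\gcd(a-1,d)$; $\ell>n-a$) the available symmetries force $\chi$ to be either constant (a contradiction) or the parity colouring, after which the dichotomy on the parity of $a$ lands either in the excluded bipartite case or in a bichromatic sum-family. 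The boundary configurations you propose to set aside ($\ell$ near $n$, $\ell=d$, $K_4-e$) are in fact already covered by this general argument and need no separate treatment.

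The genuine deficiency is the one you name yourself. The availability and sweeping claims --- which carry the entire weight of the argument --- are asserted rather than verified, and your closing paragraph concedes that making the case analysis exhaustive is ``the main obstacle'' and has not been done. A submission whose author states that the main work remains outstanding is a plan, not a proof, however accurate the plan turns out to be. To finish, you must (i) compute the admissible range of $r_1$ in each of the four families and confirm the three sweeping conditions, (ii) show that the four regimes above exhaust all $\ell$ with $\gcd(\ell,n)\ge 2$, and (iii) run the symmetry endgame separately in each regime, including the reduction modulo $g$ (checking $g\ge 2$, that the sum families sweep $\mathbb Z_g$ because $a\ge g+1$, and that $g$ even forces $a$ odd). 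None of these steps fails, but all of them must appear in the write-up before this counts as a proof.
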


To apply this, we often use the following lemma to get a long cycle with a chord.

\begin{lem}[\cite{V00}]\label{cycle with chord}
Let $k\geq 2$ be a natural number and $G$ be a graph of average degree at least $2k$ and girth $g$.
Then $G$ contains a cycle of length at least $(g-2)k+2$, with at least one chord.
\end{lem}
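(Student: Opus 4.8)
The plan is to reduce to a subgraph of large minimum degree and then extract the cycle from a longest path, using the girth hypothesis only to control how far apart the neighbours of a path endpoint must lie.

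First I would pass to a convenient subgraph. Since the average degree of $G$ is at least $2k$, we have $e(G)\ge k\,|V(G)|$, and the standard greedy argument applies: repeatedly delete a vertex of current degree at most $k$; each deletion destroys at most $k$ edges, so the process cannot exhaust all of $V(G)$ (that would destroy at most $k(|V(G)|-1)<k\,|V(G)|$ edges). Hence it halts at a nonempty subgraph $H\subseteq G$ with $\delta(H)\ge k+1$. Since every cycle of $H$ is a cycle of $G$, the girth of $H$ is at least $g$, so it suffices to find the desired cycle with a chord inside $H$.

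Next I would analyse a longest path $P=v_0v_1\cdots v_m$ of $H$ (which is nontrivial since $\delta(H)\ge k+1\ge 3$). By maximality every neighbour of $v_0$ lies on $P$; list these neighbours as $v_{i_1},\dots,v_{i_{k+1}}$ with $1=i_1<i_2<\cdots<i_{k+1}\le m$, noting there are at least $k+1$ of them. For consecutive indices $i_j<i_{j+1}$, the cycle $v_0v_{i_j}v_{i_j+1}\cdots v_{i_{j+1}}v_0$ has length $(i_{j+1}-i_j)+2$, which must be at least $g$; hence $i_{j+1}-i_j\ge g-2$, and summing over $j=1,\dots,k$ gives $i_{k+1}\ge 1+k(g-2)$. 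Then the cycle $C=v_0v_1\cdots v_{i_{k+1}}v_0$ has length $i_{k+1}+1\ge (g-2)k+2$ as required, and because $k\ge 2$ we have $1=i_1<i_2<i_{k+1}$, so the edge $v_0v_{i_2}$ of $H$ joins two non-consecutive vertices of $C$ and is therefore a chord. This produces the claimed cycle with a chord in $G$.

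I do not expect a genuine obstacle here: the only points needing a little care are the peeling estimate in the reduction (and the observation that the reduction does not lower the girth), and keeping track of the indices so that $v_0v_{i_2}$ is genuinely a chord of $C$ rather than one of its edges. Everything else is forced by the maximality of $P$ together with the girth condition.
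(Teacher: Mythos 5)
Your proof is correct, and it is essentially the standard argument from Verstra\"ete's paper \cite{V00}, which this paper cites for the lemma without reproving it: pass to a subgraph of minimum degree at least $k+1$, take a longest path, and use the girth to force the $k+1$ neighbours of an endpoint to be spread out along the path, with any intermediate neighbour supplying the chord. All the details (the peeling bound, the spacing $i_{j+1}-i_j\ge g-2$, and the check that $v_0v_{i_2}$ is a genuine chord because $1=i_1<i_2<i_{k+1}$) are handled correctly.
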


The next lemma will be used to find appropriate-size cycles (not necessarily disjoint) of consecutive even lengths in dense graphs.
This follows the approach of \cite{V02} in spirit and provides a key ingredient for the proofs of the coming sections.
Instead of adapting the route in \cite{V02} (i.e., the use of Theorem 1 and Lemma 3 of \cite{V02}),
we prove the following to improve the resulting constant coefficients (by a factor of two for general $k$).
In the case $k=2$, we undertake a more careful analysis, which we hope will shed some light on the resolution of Conjecture \ref{conj}
for small $k$ and perhaps some other related problems.

\begin{lem}\label{LEM:main}
Let $\epsilon$ be any positive real, $k\geq 2$ be a natural number and $G$ be an $n$-vertex graph.
Suppose that the average degree of $G$ is at least $8k+4\epsilon$ for $k\geq 3$ or at least $5k+2\epsilon$ for $k=2$.
Then there exist $k$ cycles of consecutive even lengths in $G$, the shortest one of which has length at most $2\log_{1+\epsilon/k}n+2$.
\end{lem}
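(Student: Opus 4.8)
The plan is to build the $k$ cycles one even length at a time, peeling off a bounded-size bipartite-like structure at each step while keeping the remaining graph dense. First I would pass to a subgraph $G'$ of minimum degree at least $4k+2\epsilon$ (for $k\geq 3$; $\tfrac{5}{2}k+\epsilon$ for $k=2$), which exists since the average degree is at least $8k+4\epsilon$ and repeatedly deleting a vertex of degree below half the average degree cannot destroy all edges. If $G'$ has a short cycle — say girth $g$ with $(g-2)\cdot 2k+2$ or so already comparable to $2\log_{1+\epsilon/k}n$ — then Lemma~\ref{cycle with chord} hands us a cycle with a chord, and I would use Lemma~\ref{A-B path} on a suitable bipartition to extract consecutive lengths; but the real case to handle is when $G'$ has large girth, where I instead want to locate a vertex $v$ whose BFS neighborhoods $N_0=\{v\}, N_1, N_2,\dots$ grow geometrically: since $\delta(G')\geq 4k+2\epsilon$, as long as we are above the girth we get $|N_{i+1}|\geq (1+\epsilon/k)|N_i|$ roughly, so within depth $O(\log_{1+\epsilon/k} n)$ two BFS layers must either collide or send an edge between them, producing a short cycle or a short theta-subgraph.

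The heart of the argument is the following extraction step, which I would isolate as the engine of the induction. Suppose we have found, inside a dense graph, a ``generalized theta'' configuration: two vertices joined by three internally disjoint paths, or equivalently a cycle with a chord of controlled total size $\ell = O(\log_{1+\epsilon/k} n)$. By Lemma~\ref{A-B path} applied with $A$ and $B$ chosen to be the two ends of the chord (or the two branch vertices), this subgraph $H$ contains an $A$–$B$ path of every length less than $|H|$ unless $H$ is exactly bipartite with that bipartition; closing such paths up through $H$ yields cycles through $H$ of a whole interval of lengths, and among $|H|\geq 2k+2$ consecutive lengths there are at least $k$ of a single parity, i.e.\ $k$ cycles of consecutive even lengths, all of length at most $2\ell+2 \le 2\log_{1+\epsilon/k}n + 2$. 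The exceptional bipartite case is handled by noting that a bipartite cycle-with-chord would force an odd-length branch, contradicting the chosen bipartition, or by instead using the second branch vertex pair; in the genuinely even/bipartite situation one enlarges the neighborhood slightly to break bipartiteness, which is where the extra $\epsilon$ budget in the degree hypothesis is spent. For $k=2$ the weaker hypothesis $5k+2\epsilon = 10+2\epsilon$ forces the more careful analysis alluded to in the statement: here I would not throw away a factor of two in passing to a regular-degree subgraph, and would track the two relevant paths more explicitly, distinguishing whether the short cycle found is even or odd and using a single extra edge to repair parity.

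The main obstacle I anticipate is controlling the length of the shortest cycle simultaneously with guaranteeing $k$ consecutive even lengths: the BFS-expansion argument naturally gives a cycle or theta of length $O(\log n)$, but to get exactly $k$ consecutive \emph{even} lengths one needs the spanning subgraph $H$ to be non-bipartite and to have order at least $2k+2$, and these two requirements pull the depth bound in opposite directions. I would resolve this by running the expansion to the first depth $t$ at which $|N_0\cup\cdots\cup N_t| > n^{1/2}$ or at which an internal edge appears, showing $t \le \log_{1+\epsilon/k} n$; the tree structure up to depth $t$ together with one or two extra edges then yields a subgraph of order at least $2k+2$ (using $\delta(G')\geq 4k+2$, so the last layer alone has size $\geq 4k$) that is a cycle with a chord, and non-bipartiteness is arranged by taking, if necessary, two distinct back-edges so that the resulting closed walk has odd length. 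The bookkeeping that the shortest of the $k$ produced cycles is at most $2\log_{1+\epsilon/k}n+2$ then follows since every cycle we produce lies inside this depth-$t$ structure, whose diameter is at most $2t+1$.
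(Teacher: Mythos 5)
Your proposal follows the right general skeleton (BFS layers, geometric expansion, a cycle-with-a-chord between layers, and Lemma~\ref{A-B path} to harvest paths of many lengths), but it is missing the two ideas that actually make this work, and as written several steps fail. First, parity: you never pass to a \emph{bipartite} subgraph, and without that the whole even-length bookkeeping collapses. The paper's proof takes a bipartite $H\subseteq G$ of average degree at least $4k+2\epsilon$; then each BFS layer $L_\ell$ lies entirely on one side, the graph between $L_\ell$ and $L_{\ell+1}$ is an induced bipartite graph, every $A$--$B$ path of even length automatically has both endpoints in $L_\ell$, and closing such a path through the BFS tree (whose path between two vertices of $L_\ell$ in different branches has a \emph{fixed even} length $2r$) produces cycles of lengths $2r+2,\dots,2r+2k$. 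In your version the cycle-with-chord can be odd, an even $A$--$B$ path can end in different layers, and the closing path need not have a controlled parity; your proposed repair (``two distinct back-edges so that the resulting closed walk has odd length'') yields closed walks, not cycles, and in any case aims at the wrong parity. Relatedly, Lemma~\ref{A-B path} requires $(A,B)$ to be a \emph{partition of $V(H)$}, not the two ends of the chord, and a single cycle with a chord only contains $A$--$B$ paths \emph{inside itself}; turning $k$ of them into $k$ cycles of consecutive even lengths requires that all of $A$ and all of $B\cap L_\ell$ be joined externally by paths of one common even length, which is exactly what the branch structure of the BFS tree provides and what your sketch does not establish.

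Second, expansion: minimum degree $4k+2\epsilon$ does \emph{not} give $|N_{i+1}|\ge(1+\epsilon/k)|N_i|$ (consider a clique on $4k+3$ vertices, where BFS stops at depth $1$). The paper instead chooses the bipartite subgraph $H$ with the \emph{minimum number of vertices} subject to having average degree at least $4k+2\epsilon$; this minimality yields the isoperimetric-type inequality $e(H[S])+e(S,V(H)\setminus S)>(2k+\epsilon)|S|$ for every $S$, and combining it with the upper bound $e(L_i,L_{i+1})\le k(|L_i|+|L_{i+1}|)$ (which holds unless the cycle-with-chord argument already wins) forces $|L_{i+1}|\ge\frac{\epsilon}{k}|V(H_i)|$ and hence the contradiction with $|V(G)|=n$ after $\log_{1+\epsilon/k}n$ steps. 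Without this vertex-minimality your depth bound has no justification. Finally, the $k=2$ case in the paper is a genuinely separate and more delicate argument (working in a non-bipartite $G$ and bounding $e(G[L_i])$ and $e(L_i,L_{i+1})$ separately by a case analysis of forbidden paths); your one-sentence remark about ``tracking the two relevant paths more explicitly'' does not engage with any of that.
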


{\noindent \it Proof.}
First let us consider for $k\geq 3$.
It is clear that $G$ contains a bipartite subgraph $H$ with average degree at least $4k+2\epsilon$.	
We choose such $H$ with the minimum $|V(H)|$.
Then it holds for any $S\subseteq V(H)$,
\begin{align}\label{neq:1}
e(H[S]) +e(V(H)\setminus S,S)>(2k+\epsilon)|S|,
\end{align}
as, otherwise $e(H[V(H)\setminus S])\geq (2k+\epsilon)|V(H)\setminus S|$, contradicting the minimality of $V(H)$.

Let $t=\log_{1+\epsilon/k}n+1$.
We may assume that $H$ doesn't contain cycles of $k$ consecutive even lengths, the shortest of which has length at most $2t$.	
Fix a vertex $r$ in $H$ and let $L_i$ denote the set of vertices at distance $i$ from $r$ in $H$.
For $i\geq 1$, let $H_i= H[\cup _{0\leq j\leq i} L_j]$.

We claim that $e(L_i,L_{i+1})\leq k(|L_i| +|L_{i+1}|)$ for any $i\leq t-1$.
Suppose for a contradiction that there exists some $\ell\leq t-1$ with $e(L_\ell,L_{\ell+1})> k(|L_i| +|L_{i+1}|)$.
By Lemma \ref{cycle with chord}, we can find $R\subseteq H[L_\ell \cup L_{\ell+1}]$ which comprises a cycle of length at least $2k+2$ plus a chord.
Let $T$ be the minimal subtree of a BFS tree with root $r$ in $H_\ell$ such that $T$ contains $V(R)\cap V_\ell$.
Let $A$ be the set of vertices of $R$ in one branch of $T$ and let $B=V(R)\setminus A$.
By the minimality of $T$, $(A,B)$ cannot be the bipartition of $R$.
By Lemma \ref{A-B path}, there are $A$-$B$ paths of all lengths up to $2k$.
It is then clear that all $A$-$B$ paths of even lengths say $2,4,...,2k$ have one endpoint in $A$ and the other in $L_{\ell}\cap (V(R)\setminus A)$.
This gives $k$ cycles $C_{2r+2}, C_{2r+4},...,C_{2r+2k}$ of consecutive even lengths in $H$, where $r$ is the distance form $L_\ell$ to the root of $T$ and thus $r\leq \ell\leq t-1$.
This proves the claim.
	
By this claim and by \eqref{neq:1} (using $S=V(H_i)$), for all $i\leq t-1$ we have
\begin{equation*}
\begin{split}
(2k+\epsilon) \sum_{j=0}^{i}|L_j| &\leq e(L_i,L_{i+1}) +e(H_i)= \sum_{j=0}^{i} e(L_j,L_{j+1})\\
&\leq \sum_{j=0}^{i}k(|L_j|+|L_{j+1}|)\leq k|L_{i+1}|+2k \sum_{j=0}^{i}|L_j|
\end{split}
\end{equation*}
This implies that for all $i\leq t-1$, $|L_{i+1}| \geq \frac{\epsilon}{k} |V(H_i)|$ and thus $|V(H_{i+1})|\geq (1+\frac{\epsilon}k)|V(H_i)|$.
So $|V(G)|\geq |V(H_t)|\geq (1+\frac{\epsilon}{k})^{t}>n$, a contradiction.
This finishes the proof for $k\geq 3$.

Now consider $k=2$. Let $G$ be an $n$-vertex graph with $e(G)\geq (5+\epsilon)n$.
Without loss of generality, we may assume that $G$ has at least $(5+\epsilon)|V(G)|$ edges and subject to this, $|V(G)|$ is the minimum.
Similarly as \eqref{neq:1}, we can derive that for any $S\subset V(G)$,
\begin{align}\label{neq:2}
e(G[S]) +e(V(G)\setminus S,S)>(5+\epsilon)|S|.
\end{align}
Let $t=\log_{1+\epsilon/2}n+1$.
Fix a vertex $r$ in $G$ and let $L_i$ denote the set of vertices at distance $i$ from $r$ in $G$.
Also for $i\geq 1$, let $G_i= G[\cup _{0\leq j\leq i} L_j]$. Note that $G[L_i]$ may contain edges.

First we claim that for any $i\leq t-1$, we have
\begin{align}\label{neq:3}
e(L_i,L_{i+1})\leq |L_i|+2|L_{i+1}|.
\end{align}
Suppose for a contradiction that $e(L_i,L_{i+1})\geq |L_i|+2|L_{i+1}|+1$ for some $i<t$.
By standard deletion arguments, there exists a nonempty connected bipartite subgraph $H\subseteq G(L_i,L_{i+1})$ such that
any vertex in $V(H)\cap L_i$ has degree at least 2 in $H$ and any vertex in $V(H)\cap L_{i+1}$ has degree at least 3 in $H$.
Let $T$ be the minimal subtree of a BFS tree with root $r$ in $G_i$ such that $T$ contains $V(H)\cap L_i$.
Let $A$ be the set of vertices of $V(H)\cap L_i$ in one branch of $T$ and let $B=(V(H)\cap L_i)\setminus A$.
Since $H$ is connected, there exists some vertex $y\in L_{i+1}$ with neighbors in both $A$ and $B$.
As $d_H(y)\geq 3$, we may assume that $x_1,x_2\in N(y)\cap A$ and $x_3\in N(y)\cap B$.
Since $x_1$ has degree at least two in $G$, let $y'\in N_H(x_1)\setminus\{y\}$.
Consider $x_3'\in N_H(y')\setminus \{x_1,x_2\}$.
In either case that $x_3'\in A$ or $x_3'\in B$,
we can find a path $aca'c'b$ on five vertices with $a,a'\in A$, $b\in B$ and $c,c'\in L_{i+1}$.
By the choice of $T$, this gives two cycles of consecutive even lengths, the shortest of which has length at most $2t$, proving \eqref{neq:3}.

Next we claim that for any $i\leq t-1$, we have
\begin{align}\label{neq:4}
e(G[L_i]) \leq 2|L_i|.
\end{align}
Suppose that $e(L_i)\geq 2|L_i|+1$ for some $i\leq t-1$.
We may further assume that the minimum degree in $G[L_i]$ is at least 3.
Let $R$ be a component of $G[L_i]$ with $e(R)\geq 2|V(R)|+1$.
Let $T$ be the minimal subtree of a BSF tree with root $r$ in $G_i$ such that $T$ contains $V(R)$.
Let $A$ be a set of vertices of $R$ in one branch of $T$ and let $B=V(R)\setminus A$.

If there exists a path $a_1a_2a_3a_4b$ in $R$ with $a_i\in A$ for $i\in [4]$ and $b\in B$,
then by using the subtree $T$, one can find two desired cycles of consecutive even lengths in $G$.
So we may assume that there is no such path in $R$, from which one can also conclude that
neither $R[A]$ or $R[B]$ can contain any cycle of length at least four or any path on six vertices.
Suppose that $e(R[A])\geq |A|+1$.
By the above propositions, it follows that $R[A]$ must contain a subgraph $R'$ consisting of two triangles with a common vertex.
As the minimum degree in $R$ is at least 3, by considering the vertices in $R'$ of degree two,
one would derive one of the subgraphs forbidden in above, a contradiction.
So $e(R[A])\leq |A|$ and similarly $e(R[B])\leq |B|$.
This shows that $e(R(A,B))\geq |A|+|B|+1$.

If there is a path $a_1a_2b_1a_3b_2$ in $R$ such that $a_1,a_2,a_3 \in A$ and $ b_1,b_2 \in B$,
then it is easy to see that $G$ contains two desired consecutive even cycles.
So $R$ doesn't contain such a path (call it a {\it forbidden} path).
We see that $R(A,B)$ contains an even cycle $C$.
We assert that $|C|\geq 6$.
Suppose that $C$ is a four-cycle, say $a_1b_1a_2b_2a_1$ with $a_1,a_2\in A$ and $b_1,b_2\in B$.
Let $x\in N_R(a_1)\setminus \{b_1,b_2\}$.
If $x\in B\setminus \{b_1,b_2\}$, then the path $P$ in $T$ between $x$ and $b_1$ gives
two desired cycles $P\cup xa_1b_1$ and $P\cup xa_1b_2a_2b_1$ of consecutive even lengths.
If $x\in A\setminus \{a_2\}$, then we get a forbidden path.
This in fact shows that any vertex in $C$ cannot have neighbors outside of $C$,
implying that $R=V(C)$ and thus contradicting that $e(R)\geq 2|V(R)|+1$.

If $A$ or $B$ contains an edge, since $|C|\geq 6$ and $R$ is connected,
it is easy to see that there always exists a forbidden path in $R$.
So we may assume that $e(A)=e(B)=0$.
Take the minimal subtree $T'$ of $T$ containing $A$ and view $B$ as the next level of $T'$.
Running the same proof for \eqref{neq:3}, one would get $e(R(A,B))\leq |A|+2|B|$.
But $e(R(A,B))=e(R)\geq 2|A|+2|B|+1$.
This final contradiction proves \eqref{neq:4}.

Now combining \eqref{neq:2}, \eqref{neq:3} and \eqref{neq:4}, for any $i\leq t-1$ we have
\begin{equation*}
\begin{split}
(5+\epsilon) \sum_{j=0}^{i}|L_j| &\leq e(L_i,L_{i+1}) +e(\cup_{j\leq i} L_j)\leq \sum_{j=0}^{i} (e(L_j,L_{j+1})+e(L_j))\\
&\leq \sum_{j=0}^{i}(3|L_j|+2|L_{j+1}|)\leq 2|L_{i+1}|+ 5\sum_{j=0}^{i} |L_j|
\end{split}
\end{equation*}
Then for any $i\leq t-1$, we have $|L_{i+1}|\geq \frac{\epsilon}{2} |V(G_i)|$ and thus $|V(G_{i+1})|\geq (1+\frac{\epsilon}{2})|V(G_i)|$.
This implies a contradiction that $|V(G)|\geq |V(G_t)|\geq (1+\frac{\epsilon}{2})^{t}> n$, proving the lemma.
\qedB

\section{Proof of Theorem~\ref{THM: main}}
Let $k\geq 19$ and $G$ be a graph of order $n\geq n_k=2^{32k^3}$ and average degree at least $k^2 +3k +2$.
Assume that $G$ doesn't contain $k$ disjoint cycles of consecutive even lengths.

\medskip
{\noindent \bf Outline of the proof.} We begin with a sketch of the proof. Set $H:=G$.
Following the approach in \cite {V02}, we will repeatedly apply Lemma \ref{LEM:main}
on $H$ to get $k$ consecutive even cycles (say $C_1,...,C_k$) of bounded lengths and then update $H:=H-\cup_{i=1}^k V(C_i)$,
until the average degree of $H$ is small.
This will yield a partition $V(G)=V_1\cup V_2$ satisfying that $|V_2|=o(n)$ and a considerable amount of edges of $G$ lie in $(V_1,V_2)$.
If $e(V_1,V_2)$ is large enough, then by Lemma \ref{LEM:Kss} we find $K_{t,t}\subseteq (V_1,V_2)$ for some large $t$, which would complete the proof.
So $e(V_1)$ must be $\Omega(n)$.
A new and crucial observation here is that vertices of $V_2$ with a large number of neighbors in $V_1$
would help building the desired even cycles a lot.
On the other hand, if such vertices in $V_2$ are few,
then one will also benefit as the relevant density between $V_1$ and $V_2$
will increase in the recursive process of deleting certain disjoint cycles which have been found.
This paradox will be demonstrated with details in two separated cases, depending on if $e(V_1)$ is relatively big or just of intermediate size.

\medskip

To be precise, let $t=\log_{1+1/4k}n+1$ and we define a sequence of subgraphs $G_0\supseteq G_1\supseteq ... \supseteq G_m$ as following.
Let $G_0:= G$. Suppose that we have defined $G_i$ for some $i\geq 0$.
Denote $r_i$ to be the minimum integer $r$ such that $G_i$ contains $k$ cycles of lengths $2r,2r +2,...,2r+2k-2$
(in case that there is no $k$ cycles of consecutive even lengths, let $r_i=\infty$).
Let $X_i$ be a union of vertex-sets of $k$ cycles of lengths $2r_i,2r_i+2,...,2r_i+2k-2$ in $G_i$.
If $r_i\leq t$, then let $G_{i+1}=G_i-V(X_i)$; otherwise, we terminate (say at $G_m$).

Write $V_1'= V(G_m)$ and $V_2'= V(G)\setminus V_1'$.
Note that $2\leq r_i\leq t$ for each $i\in \{0,1,...,m-1\}$.
Among all defined $r_i$'s, if $k$ of them are identical,
then clearly $G$ contains $k$ disjoint cycles of consecutive even lengths, a contradiction.
So we have $m\leq tk$ and $|V_2'|\leq 2(t+k)k\cdot m\leq 2k^2t(t+k).$
Let $$U=\left\{v\in V_1': d_G(v)\geq \frac{n}{\log_2n} \right\}, ~~~ V_1 = V_1' \setminus U \text{~~ and ~~} V_2 = V_2' \cup U.$$
We see that $G[V_1']=G_m$ doesn't contain $k$ cycles of consecutive even lengths,
where the shortest cycle has length at most $2t=2\log_{1+1/4k}n +2$.
By Lemma \ref{LEM:main}, $G[V_1']$ or any its subgraph (such as $G[V_1]$) has average degree at most $8k+1$.
Then it holds
$$|U|\cdot \frac{n}{\log_2n}\leq \sum_{v\in U} d_G(v)\leq 2e(G[V_1'])+|U||V_2'|\leq (8k+1)n+|U||V_2'|,$$
implying that $|U|\leq (16k+2)\log_2 n$. Therefore, we have
\begin{align}\label{equ:V2}
|V_2|=|U|+|V_2'|\leq k^4 (\log_{1+1/4k}n)^2 \text{~~ and ~~} e(G[V_2])\leq |V_2|^2/2\leq n/8.
\end{align}
We also collect the properties of $G[V_1]$ that
\begin{align}\label{equ:V1}
G[V_1] \text{ has average degree at most } 8k+1 \text{ and any } v\in V_1 \text{ has } d_G(v)\leq n/\log_2n.
\end{align}
Next we claim that
\begin{align}\label{equ:e1}
e(G[V_1])> 7n/8.
\end{align}
Otherwise we have $e(G[V_1]) \leq 7n/8$. By \eqref{equ:V2}, it then follows that
\begin{equation*}
e(V_1,V_2)\geq \frac{1}{2}(k^2+3k+2)n-\frac{7}{8}n-\frac{1}{8}n\geq \frac{1}{2}(k^2+3k)|V_1|
\end{equation*}
Let $s =\frac{1}{2}(k^2+3k)$. Since $|V_1|\geq |V_2|^{s}$, by Lemma~\ref{LEM:Kss} (with $\delta=1$),
$G$ contains a copy of $K_{s,s}$ and thus $G$ contains $k$ disjoint cycles of lengths $4,6,...2k+2$. This proves \eqref{equ:e1}.

The rest of the proof will be divided into two cases, depending on whether $e(G[V_1]) \leq (2k+1)n$ or not.
We distinguish in two subsections.

\subsection {$e(G[V_1])\leq (2k+1)n$}\label{subs:3.1}
Set $e(G[V_1]):= \left(\frac{5}{8} + \frac{\epsilon(8k+2)}{k}\right) n$.
By \eqref{equ:e1}, we have $\frac{1}{32k+8}<\frac{\epsilon}{k}<\frac{1}{4} $.
Suppose there are exactly $m$ vertices $v\in V_2$ such that $|N(v)\cap V_1|>(1- \frac{\epsilon}{k})n$.
	
\begin{claim}\label{Claim:I-m}
$\frac{5k}{2}<m<\frac{(k+3)k}{2}.$
\end{claim}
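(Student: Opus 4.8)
\noindent I would prove the two inequalities of Claim~\ref{Claim:I-m} separately, in each case by contradiction with the standing assumption that $G$ has no $k$ disjoint cycles of consecutive even lengths, freely using that $|V_1|\ge n-|V_2|$ with $|V_2|=o(n)$ by \eqref{equ:V2}, that $\tfrac{\epsilon}{k}<\tfrac14$, and that $n$ dwarfs any fixed polynomial in $k$. For the bound $m<\tfrac{(k+3)k}{2}=s$, the plan is to rule out $m\ge s$ by constructing the cycles $C_4,C_6,\dots,C_{2k+2}$ directly. Take $s$ vertices of $V_2$, each with more than $(1-\tfrac{\epsilon}{k})n$ neighbours in $V_1$, and split them into blocks $B_2,\dots,B_{k+1}$ with $|B_j|=j$, possible since $\sum_{j=2}^{k+1}j=s$. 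For each $j$ write $B_j=\{w_1,\dots,w_j\}$ with $w_{j+1}:=w_1$, and look for an even cycle $w_1x_1w_2x_2\cdots w_jx_jw_1$ of length $2j$ with $x_i\in V_1$, the $x_i$ pairwise distinct and distinct across blocks. The point is that any two of these vertices $w,w'$ satisfy $|N(w)\cap N(w')\cap V_1|>|V_1|-\tfrac{2\epsilon}{k}n>\tfrac n2-o(n)$ because $\tfrac{2\epsilon}{k}<\tfrac12$; since fewer than $s$ vertices of $V_1$ are ever in use and $\tfrac n2-o(n)>s$, a fresh $x_i\in N(w_i)\cap N(w_{i+1})\cap V_1$ always exists. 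Processing the blocks one at a time produces $k$ vertex-disjoint cycles of lengths $4,6,\dots,2k+2$, a contradiction; hence $m\le s-1$.

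For the bound $m>\tfrac{5k}{2}$, I would assume $m\le\tfrac{5k}{2}$, let $M\subseteq V_2$ be the set of these $m$ vertices, and pass to $G':=G-M$. As $|M|\le\tfrac{5k}{2}$ and every vertex of $G$ has degree less than $n$, the graph $G'$ has average degree at least $(k+1)(k+2)-5k=k^2-2k+2$, still contains no $k$ disjoint cycles of consecutive even lengths, and now \emph{every} vertex of $V_2\setminus M$ has at most $(1-\tfrac{\epsilon}{k})n$ neighbours in $V_1$. Moreover $e(V_1,V_2\setminus M)\ge e(V_1,V_2)-m|V_1|\ge\bigl(s-\tfrac{5k}{2}+\tfrac14-\tfrac{\epsilon(8k+2)}{k}\bigr)n$, so deleting $M$ costs at most roughly $\tfrac{5k}{2}n$ bipartite edges --- which is why $s-\tfrac{5k}{2}=\tfrac{k(k-2)}{2}$ is the relevant break-even quantity behind the constant $\tfrac{5k}{2}$. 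I would then re-run the recursive cycle-deletion process of this section on $G'$ (its average degree comfortably exceeds $8k+1$ since $k\ge19$), obtain a partition $V(G')=W_1\cup W_2$ with $G'[W_1]$ of average degree at most $8k+1$ and $|W_2|=o(n)$, and exploit the fact that $G'$ has no vertex joined to almost all of its sparse part to push the bipartite density $e(W_1,W_2)/|W_1|$ up past the threshold $s-1+\delta$ --- iterating the reduction boundedly many times if necessary --- so that Lemma~\ref{LEM:Kss} produces a $K_{s,s}$, hence $k$ disjoint cycles of lengths $4,6,\dots,2k+2$, the contradiction.

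The upper bound is routine once one commits to building the cycles by hand rather than chasing a complete bipartite subgraph, all the slack coming from $\tfrac{\epsilon}{k}<\tfrac14$. The genuine obstacle is the lower bound: one has to make the heuristic ``the bipartite density strictly increases under the recursion'' precise and quantitative --- tracking simultaneously how $e(W_1,W_2)$ and $|W_1|$ move as the bounded-length cycles are deleted, accounting exactly for the at most $\tfrac{5k}{2}n$ edges lost when the few almost-complete vertices are removed (which pins the constant $\tfrac{5k}{2}$ to the value $\tfrac{k(k-2)}{2}$), and checking that only boundedly many rounds occur. That bookkeeping, rather than any single new idea, is the crux.
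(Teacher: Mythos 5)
Your upper bound $m<\tfrac{(k+3)k}{2}$ is essentially the paper's own argument (greedy construction of $C_4,C_6,\dots,C_{2k+2}$ through pairwise common neighbourhoods of size $>\tfrac n2-o(n)$), and it is correct. The lower bound, however, has a genuine gap, and you have in fact flagged it yourself: the entire content of ``push the bipartite density up past the threshold $s-1+\delta$, iterating the reduction boundedly many times if necessary'' is missing, and it is not clear it can be supplied. Quantitatively: after deleting $M$ you have $e(V_1,V_2\setminus M)\ge\bigl(s-\tfrac{5k}{2}+\tfrac14-\tfrac{\epsilon(8k+2)}{k}\bigr)n$, and since $\tfrac{\epsilon}{k}$ is only known to lie in $(\tfrac{1}{32k+8},\tfrac14)$, this can be as small as roughly $(s-\tfrac{9k}{2})n$. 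To invoke Lemma~\ref{LEM:Kss} with parameter $s$ you must recover a density deficit of order $kn$; but the only gain available from the fact that every remaining vertex of $V_2$ has at most $(1-\tfrac{\epsilon}{k})n$ neighbours in $V_1$ is $\tfrac{\epsilon}{k}n$ per vertex, which may be as small as $\tfrac{n}{32k+8}$, and each round of the cycle-deletion recursion removes only $o(n)$ vertices from $W_1$. No mechanism in your sketch produces a density increase of order $kn$ in boundedly many rounds, so the contradiction is never reached.

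The paper's route is different and avoids this entirely: it never tries to find a $K_{s,s}$ after deleting $M$. Instead it proves the quantitative upper bound $e(V_1,V_2)\le mn+(1-\tfrac{\epsilon}{k})\bigl(\tfrac{k^2+3k}{2}-m\bigr)n+\tfrac n4$ directly, by taking a \emph{maximal} collection of disjoint cycles of lengths $2k+2,2k,\dots$ in $(V_1,V_2)$ and applying Lemma~\ref{LEM:Kss} only with the much smaller parameter $k+1-t$ (and $\delta=\tfrac12$) to the leftover bipartite graph $(R_1,R_2)$, whose density is estimated using that all but $m$ vertices of $V_2$ have at most $(1-\tfrac{\epsilon}{k})n$ neighbours in $V_1$. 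Comparing this upper bound with $e(V_1,V_2)=e(G)-e(G[V_1])-e(G[V_2])$ then yields $\tfrac12(k^2+3k)-m\le 8k+2$, and $m\le\tfrac{5k}{2}$ would force $k^2-18k-4\le0$, contradicting $k\ge19$. That single inequality on $e(V_1,V_2)$, not an iterated re-partitioning of $G-M$, is the idea your proposal is missing.
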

	
\begin{proof}[Proof of Claim \ref{Claim:I-m}]
We first show $m<(k+3)k/2$. Otherwise there exist vertices $x_i\in V_2$ for $1\leq i\leq (k+3)k/2$ with $|N(x_i)\cap V_1|> (1- \frac{\epsilon}{k})n>3n/4$.
Any two of these vertices have at least $n/2$ common neighbors in $V_1$,
so one can find $k$ disjoint cycles of lengths $4,6,...,2(k+2)$ in $(V_1,V_2)$ (i.e., greedily constructing these cycles one at a time using vertices $x_i$'s).

To prove $m>\frac{5k}{2}$, we will need to show that $e(V_1,V_2)\leq mn + (1- \frac{\epsilon}{k})(\frac{k^2+3k}{2}-m)n+\frac{n}4$.
Suppose for a contradiction that $e(V_1,V_2)> mn + (1- \frac{\epsilon}{k})(\frac{k^2+3k}{2}-m)n+\frac{n}4$.
Let $C_1,C_2,...,C_t$ be the maximal collection of $t$ disjoint cycles in $(V_1,V_2)$ with $|C_j|=2k+4-2j$.
Clearly we have $t\leq k-1$.
Let $R_i=V_i-V(C_1)\cup...\cup V(C_t)$ for $i\in \{1,2\}$.
Then $(R_1,R_2)$ doesn't contain any cycle of length $2k+2-2t$ and $|V_i\setminus R_i|=\frac{1}{2}(2k+3-t)t$ for $i\in \{1,2\}$.
We have
\begin{equation*}
\begin{split}
e(R_1,R_2)&\geq e(V_1,V_2)-\sum_{x\in V_2\setminus R_2}d_{V_1}(x)-\sum_{y\in V_1\setminus R_1} d_{V_2}(y)\\
&\geq e(V_1,V_2)- \left((1- \frac{\epsilon}{k})\frac{(2k+3-t)t}{2}+\frac{\epsilon}{k}m\right)n-\frac{(2k+3-t)t}{2}\frac{n}{\log_2 n}\\
&>\left(k^2+3k-(2k+3-t)t\right)(1- \frac{\epsilon}{k})\frac{n}{2}=(k-t)(k-t+3)(1- \frac{\epsilon}{k})\frac{n}{2}.
\end{split}
\end{equation*}
Using $\frac{\epsilon}{k}<\frac14$ and $k-t\geq 1$, this implies that $e(R_1,R_2)\geq \frac{3}{2}(k-t)n\geq (k-t+\frac{1}{2})n$.	
Applying Lemma \ref{LEM:Kss} on $(R_1,R_2)$ (with $s=k+1-t$ and $\delta=1/2$), we see that $(R_1,R_2)$ contains a copy of $K_{k+1-t,k+1-t}$ and thus a cycle of length $2k+2-2t$,
a contradiction. This proves the above upper bound of $e(V_1,V_2)$.
			
Combining the above inequalities, we have the following
$$mn + (1- \frac{\epsilon}{k})((k^2+3k)/2-m)n+\frac{n}4\geq e(V_1,V_2)= e(G) - e(G[V_1]) -e(G[V_2])$$
$$\geq \frac{1}{2}(k^2+3k+2)n - \left(5/8+ \frac{\epsilon}{k}(8k+2)\right)n-\frac{n}{8},$$
which implies that $\frac{1}{2}(k^2 +3k) - m \leq 8k+2$.
If $m\leq \frac{5k}{2}$, then we have $k^2 - 18k -4 \leq 0$, contradicting that $k \geq 19$.
This proves $m>\frac{5k}{2}$ and Claim \ref{Claim:I-m}.\footnote{If we assume that $e(G)\geq \frac{1}{2}(k^2+3k-2)n$ and $k\geq 150$ here instead,
then it will give $k^2-(13+4/\epsilon)k-(2m+4)\leq 0$. So still we can prove $m>5k/2$ and Claim \ref{Claim:I-m}.}
\end{proof}

\begin{claim}\label{Claim:I-cycles}
Let $t$ be a natural number. For any natural numbers $c_i$ for $i\in [t]$ satisfying that $\lceil c_1/2\rceil+\lceil c_2/2\rceil+...+\lceil c_t/2\rceil\leq m$,
there exist $t$ disjoint cycles $C_1,C_2,...C_t$ of lengths $2c_1,2c_2,...,2c_t$ in $G$ such that $|V(C_i) \cap V_2| \leq \lceil c_i/2 \rceil$.
\end{claim}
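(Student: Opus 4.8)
The plan is to build the $t$ cycles greedily, one at a time, from a template that trades vertices of $W$ for short paths inside $G[V_1]$. Recall that $W\subseteq V_2$ consists of the $m$ vertices $v$ with $|N(v)\cap V_1|>(1-\epsilon/k)n$; since $\epsilon/k<\tfrac14$, each $w\in W$ misses fewer than $\tfrac{\epsilon}{k}n<\tfrac n4$ vertices of $V_1$, and $|V_1|=n-o(n)$ by \eqref{equ:V2}. For a target even length $2c$, put $a=\lceil c/2\rceil$; the cycle will have the form $w_1Q_1w_2Q_2\cdots w_aQ_aw_1$, where $w_1,\dots,w_a$ are distinct vertices of $W$ and each $Q_j$ is a path lying in $G[V_1]$, internally disjoint from everything used so far, whose first vertex is a neighbour of $w_j$ and whose last vertex is a neighbour of $w_{j+1}$ (indices mod $a$). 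Counting vertices forces $\sum_j|Q_j|=2c-a$, so I take each $Q_j$ to have three vertices (a two-edge path of $G[V_1]$) when $c$ is even, and when $c$ is odd I shrink exactly one $Q_j$ to a single vertex, namely a common neighbour of $w_j$ and $w_{j+1}$ in $V_1$. Such a cycle meets $V_2$ in exactly $a=\lceil c_i/2\rceil$ vertices; since $\sum_i\lceil c_i/2\rceil\le m=|W|$ there are enough vertices of $W$ to realise all $t$ cycles pairwise disjointly, and the total number of $V_1$-vertices ever used is at most $\sum_i2c_i\le4\sum_i\lceil c_i/2\rceil\le4m<2k(k+3)$, a constant independent of $n$.

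With this template the claim reduces to the following assertion, to be applied repeatedly with $S$ the set of $V_1$-vertices already committed (so $|S|<2k(k+3)$ throughout): \emph{for every $S\subseteq V_1$ with $|S|<2k(k+3)$ and all $u,u'\in W$, possibly equal, the graph $G[V_1]-S$ contains a path on three vertices with one endpoint in $N(u)$ and the other in $N(u')$, and also a common neighbour of $u$ and $u'$ in $V_1\setminus S$.} The second part is immediate, since $|N(u)\cap N(u')\cap V_1|>(1-2\epsilon/k)n\ge n/2$ dwarfs $|S|$. The first part is the crux of the whole claim, and I expect it to be the main obstacle; below I work modulo $S$, as its $o(n)$ vertices (and the edges they carry, $o(n)$ since $\Delta(G[V_1])<n/\log_2 n$ by \eqref{equ:V1}) do not affect any estimate.

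To prove it, observe that a path on three vertices with \emph{both} endpoints in $T:=N(u)\cap N(u')\cap V_1$ is already admissible; so if $G[V_1]-S$ had no admissible path, then no vertex of it would have two neighbours in $T$, whence $G[T]$ has maximum degree at most one and every vertex of $Z:=V_1\setminus T$ has at most one neighbour in $T$. As $|Z|<\tfrac{2\epsilon}{k}n$ while $G[V_1]$ has $\bigl(\tfrac58+\tfrac{\epsilon(8k+2)}{k}\bigr)n>\tfrac78n$ edges (by the value fixed at the start of this subsection and by \eqref{equ:e1}), this leaves at least $\bigl(\tfrac18+8\epsilon\bigr)n$ edges inside $Z$, so $G[Z]$ has average degree at least $8k+\tfrac{k}{8\epsilon}$. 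If $\epsilon/k<\tfrac18$ this is contradictory: Lemma~\ref{LEM:main}, applied to $G[Z]\subseteq G[V_1']=G_m$ with parameter $k$ and slack $\epsilon'=\tfrac{k}{32\epsilon}>\tfrac14$, yields $k$ cycles of consecutive even lengths in $G_m$ whose shortest has length at most $2\log_{1+\epsilon'/k}|Z|+2<2\log_{1+1/4k}n+2=2t$ (using $|Z|<n$ and $\epsilon'/k>\tfrac1{4k}$), contradicting the defining property of $G_m$. In the complementary range $\tfrac18\le\epsilon/k<\tfrac14$ — equivalently $e(G[V_1])\ge(k+\tfrac78)n$, so that $G[V_1]$ is itself of average degree at least $2k+\tfrac74$ — I would argue directly from this density: extract a subgraph of large minimum degree, produce a long cycle with a chord via Lemma~\ref{cycle with chord}, and feed it through Lemma~\ref{A-B path} within the BFS layering of $G_m$ to locate $k$ short consecutive even cycles in $G_m$, again a contradiction. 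Making a single clean case split cover both ends of the range of $\epsilon/k$ — and, relatedly, using the surplus of candidates, $m>\tfrac{5k}{2}$, to select a pair $u,u'$ for which $Z$ is genuinely small — is, I expect, the delicate point of the proof. Granting the assertion, one processes the cycles in order and, within each, builds the arcs $Q_1,\dots,Q_a$ successively via the assertion, obtaining the required $t$ disjoint cycles with $|V(C_i)\cap V_2|=\lceil c_i/2\rceil$.
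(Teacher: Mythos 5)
Your construction is the same as the paper's: $\lceil c_i/2\rceil$ hub vertices from the set of $m$ high-degree vertices of $V_2$, joined cyclically by two-edge paths of $G[V_1]$ whose endpoints lie in the relevant common neighbourhoods (one arc degenerating to a single common neighbour when $c_i$ is odd), with disjointness across cycles absorbed by the $n/\log_2 n$ degree bound of \eqref{equ:V1}. The crux is, as you say, producing the two-edge paths, and there your accounting has a genuine gap. If no vertex of $V_1$ has two neighbours in $T$, the edges meeting $T$ number at most $e(G[T])+e(T,Z)\le \frac{|T|}{2}+|Z|=\frac{|V_1|}{2}+\frac{|Z|}{2}\le \frac{n}{2}+\frac{\epsilon}{k}n$; you bounded this by $\frac{n}{2}+\frac{2\epsilon}{k}n$, losing $\frac{\epsilon}{k}n$, and that lost term is exactly what forces your case split. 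With the tight count you get $e(G[Z])\ge\left(\frac18+8\epsilon+\frac{\epsilon}{k}\right)n$, whereas $G[Z]\subseteq G_m$ has average degree at most $8k+1$ by \eqref{equ:V1} (i.e.\ Lemma~\ref{LEM:main} applied with slack $\frac14$, whose length bound $2\log_{1+1/4k}|Z|+2\le 2t$ is all that is needed --- no bespoke slack $\epsilon'=k/(32\epsilon)$), so $e(G[Z])\le\frac{8k+1}{2}\cdot\frac{2\epsilon}{k}n=\left(8\epsilon+\frac{\epsilon}{k}\right)n$, an outright contradiction over the whole range $\frac{1}{32k+8}<\frac{\epsilon}{k}<\frac14$. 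This is precisely the paper's inequality $(e(A_i)-|A_i|/2)+(e(A_i,B_i)-|B_i|)\ge n/8$, and it makes your second case vanish.

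As written, however, the proof is incomplete: for $\frac18\le\frac{\epsilon}{k}<\frac14$ you offer only a sketch --- pass to large minimum degree and feed Lemma~\ref{cycle with chord} and Lemma~\ref{A-B path} through the BFS layering of $G_m$ --- and, as described, this does not go through. In that range $G[V_1]$ has average degree between roughly $2k+\frac74$ and $4k+\frac94$, far below the $8k$ that the proof of Lemma~\ref{LEM:main} needs in order to make some layer graph $G(L_i,L_{i+1})$ have average degree above $2k$, which is what Lemma~\ref{cycle with chord} (with girth $4$) requires to return a chorded cycle of length at least $2k+2$ and hence $k$ consecutive even cycles via Lemma~\ref{A-B path}; average degree $2k+O(1)$ yields only about half that many. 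No alternative argument is supplied, so the missing case is a real gap --- though one that closes immediately once the edge count above is done tightly.
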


\begin{proof}[Proof of Claim \ref{Claim:I-cycles}]
We first show how to find a cycle $C_1$ of length $2c_1$ with $|V(C_1)\cap V_2|\leq \lceil c_1/2 \rceil$.
Take vertices $v_1,v_2,...v_{\lceil c_1/2 \rceil}$ in $V_2$ with $|N(v_i)\cap V_1|> (1-\frac{\epsilon}{k})n$.
Let $A_i = N(v_i) \cap N(v_{i+1}) \cap V_1$ for $1\leq i<\lceil c_1/2\rceil$ and $A_{\lceil c_1/2 \rceil} = N(v_1) \cap N(v_{\lceil c_1/2 \rceil}) \cap V_1$.
Let $B_i = V_1\setminus A_i$. So $|A_i|\geq (1-\frac{2\epsilon}{k})n$ and $|B_i|\leq \frac{2\epsilon}{k}n$.
We now assert that there are $\lceil c_1/2 \rceil$ disjoint paths $P_i$ of lengths two in $G[V_1]$ such that both endpoints of $P_i$ are in $A_i$ for each $i$.
Note that for each $i$, $G[B_i]$ doesn't contain $k$ cycles of consecutive even lengths, the shortest of which has length at most $2\log_{1+1/4k}|B_i| +2$.
So by Lemma \ref{LEM:main}, we have $e(B_i)\leq (8k+1)|B_i|/2\leq \frac{\epsilon}{k}(8k+1)n$. Then we have
$$(e(A_i)- |A_i|/2)+ (e(A_i,B_i) -|B_i|)= e(V_1) -e(B_i) -|A_i|/2-|B_i|$$
$$\geq \frac{\epsilon}{k}(8k+2)n+5n/8- \frac{\epsilon}{k}(8k+1)n-\left(\frac{\epsilon}{k}n+n/2\right)=n/8.$$
So for each $i$, either $e(A_i)\geq |A_i|/2+n/16$ or $e(A_i,B_i)\geq |B_i|+n/16$.
Since $m\leq k(k+3)/2$ and $G[V_1]$ has maximum degree at most $n/\log_2 n$, we have $n/16>3mn/ \log_2 n\geq \sum_{x\in V(P_1)\cup...\cup V(P_{i-1})} d_{G[V_1]}(x)$.
For all $i= 1,2, ..., \lceil c_1/2 \rceil$, in either case we can find a path $P_i$ of length two in $G[V_1]$ with both endpoints in $A_i$ and disjoint from $V(P_1)\cup...\cup V(P_{i-1})$.
No matter whether $c_1$ is even or odd, using these disjoint paths $P_i$ in $G[V_1]$ and vertices $v_1,v_2,...v_{\lceil c_1/2 \rceil}$,
it is easy to form a desired cycle $C_1$ of length $2c_1$ in $G$.

Because $G[V_1]$ has maximum degree at most $n/ \log_2 n$, repeatedly using the above arguments,
we can in fact find the desired cycle $C_i$ in $G[V_1,V_2)-(V(C_1)\cup...\cup V(C_{i-1}))$ for all $i\in [t]$.
This proves Claim \ref{Claim:I-cycles}.
\end{proof}	

From now on let $c_i=k+2-i$ for all $i\in [k]$.
Let $\ell$ be the maximum integer such that $\lceil c_1/2\rceil+\lceil c_2/2\rceil+...+\lceil c_l/2\rceil\leq m$.
By Claim \ref{Claim:I-m}, we have $m>\frac{5}{2}k$, which derives that $\ell\geq 5$.
	
\begin{claim}\label{Claim:I-e12}
$e(V_1,V_2)\leq \frac{1}{2}(k^2+3k)n - \frac{1}{4}(2k+1-\ell)\ell n$
\end{claim}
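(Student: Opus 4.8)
\textbf{Proof proposal for Claim~\ref{Claim:I-e12}.}
The plan is to use the cycles supplied by Claim~\ref{Claim:I-cycles} to soak up almost all of the special vertices, and then to run a maximal–collection argument (in the style of Claim~\ref{Claim:I-m}) inside what is left. Since $\sum_{i=1}^{\ell}\lceil c_i/2\rceil\le m$ by the choice of $\ell$, Claim~\ref{Claim:I-cycles} (applied with $c_i=k+2-i$, $i\in[\ell]$) gives disjoint cycles $C_1,\dots,C_\ell$ in $G$ with $|C_i|=2(k+2-i)$ for which $V(C_i)\cap V_2$ consists of exactly $\lceil c_i/2\rceil$ special vertices, i.e.\ vertices $x\in V_2$ with $|N(x)\cap V_1|>(1-\tfrac{\epsilon}{k})n$. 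Write $M:=\sum_{i=1}^{\ell}\lceil c_i/2\rceil$; the maximality of $\ell$ forces $M\le m<M+\lceil(k+1-\ell)/2\rceil$, and a direct evaluation of the sum of ceilings gives $M=\tfrac14\ell(2k+4-\ell)+\eta$ for some $\eta\in\{-\tfrac14,0,\tfrac14\}$.

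Next I would put $R_i^{(1)}:=V_i\setminus\bigcup_{j\le\ell}V(C_j)$ and let $s\ge0$ be the largest integer such that $G(R_1^{(1)},R_2^{(1)})$ contains disjoint cycles $C_{\ell+1},\dots,C_{\ell+s}$ of lengths $2(k+1-\ell),2(k-\ell),\dots,2(k+2-\ell-s)$. If $s\ge k-\ell$, then $C_1,\dots,C_{\ell+s}$ already form $k$ disjoint cycles of lengths $4,6,\dots,2(k+1)$, contrary to assumption, so $s\le k-\ell-1$. With $R_i:=R_i^{(1)}\setminus\bigcup_{p=1}^sV(C_{\ell+p})$, each $C_{\ell+p}$ is a cycle of a bipartite graph, hence balanced, so it deletes exactly $k+2-\ell-p$ vertices from each of $R_1^{(1)},R_2^{(1)}$, whence $|R_i^{(1)}\setminus R_i|=\tfrac12 s(2k+3-2\ell-s)$; and by maximality of $s$, $G(R_1,R_2)$ has no cycle of length $2(k+1-\ell-s)$. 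All deleted cycles together have $O(k^2)$ vertices, so by~\eqref{equ:V2} we have $|R_1|\ge n-|V_2|-O(k^2)\ge n/2$, while $|R_2|\le|V_2|$ is polylogarithmic in $n=2^{32k^3}$, so $\tfrac12|R_1|\ge|R_2|^{\,k+1-\ell-s}$. Applying Lemma~\ref{LEM:Kss} to $(R_1,R_2)$ with $s''=k+1-\ell-s$ and $\delta=\tfrac12$: an inequality $e(R_1,R_2)\ge(s''-\tfrac12)|R_1|$ would produce $K_{s'',s''}\subseteq G(R_1,R_2)$, hence a cycle of length $2(k+1-\ell-s)$, which is impossible; therefore $e(R_1,R_2)<(k-\ell-s+\tfrac12)|R_1|\le(k-\ell-s+\tfrac12)n$.

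Finally I would pass back to $e(V_1,V_2)$ by adding the edges at the deleted vertices. The $O(k^2)$ deleted vertices of $V_1$ each have degree $\le n/\log_2 n$ by~\eqref{equ:V1}, so they cost only $o(n)$; among the deleted vertices of $V_2$, the $M$ special vertices of $C_1,\dots,C_\ell$ cost at most $Mn$, while the $\tfrac12 s(2k+3-2\ell-s)$ vertices of $C_{\ell+1},\dots,C_{\ell+s}$ lying in $V_2$ lie in $R_2^{(1)}$, so at most $m-M$ of them are special (costing $\le n$ each) and the remainder cost $\le(1-\tfrac{\epsilon}{k})n$ each. This yields
\[
e(V_1,V_2)<\Bigl[(k-\ell-s)+\bigl(1-\tfrac{\epsilon}{k}\bigr)\tfrac{s(2k+3-2\ell-s)}{2}+M+\tfrac{\epsilon}{k}(m-M)+\tfrac12\Bigr]n+o(n).
\]
As a function of $s$ on $\{0,\dots,k-\ell-1\}$ the bracket is increasing (its forward difference is $-1+(1-\tfrac{\epsilon}{k})(k+1-\ell-s)$, which is $>0$ since $\tfrac{\epsilon}{k}<\tfrac14$ and $k+1-\ell-s\ge3$ in the relevant range), so the bracket is maximized at $s=k-\ell-1$, where $2k+3-2\ell-s=k-\ell+4$. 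Since $m-M$ is a nonnegative integer less than $\lceil(k+1-\ell)/2\rceil$, one checks $m-M\le\tfrac12(k-\ell-1)(k-\ell+4)$ for $k-\ell\ge1$, so the coefficient of $\tfrac{\epsilon}{k}$ at $s=k-\ell-1$ is $\le0$; discarding it and using $M\le\tfrac14\ell(2k+4-\ell)+\tfrac14$, the bracket is at most $\tfrac12\bigl((k-\ell)^2+3(k-\ell)-2\bigr)+\tfrac14\ell(2k+4-\ell)+\tfrac34$.

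The last step rests on the exact identity
\[
\tfrac12\bigl((k-\ell)^2+3(k-\ell)-2\bigr)+\tfrac14\ell(2k+4-\ell)+\tfrac{\ell+4}{4}=\tfrac12(k^2+3k)-\tfrac14(2k+1-\ell)\ell,
\]
which turns the bound into $e(V_1,V_2)<\bigl(\tfrac12(k^2+3k)-\tfrac14(2k+1-\ell)\ell-\tfrac{\ell+1}{4}\bigr)n+o(n)$; since $\ell\ge5$, the $-\tfrac{\ell+1}{4}n$ term absorbs the $o(n)$ tail, which gives the claimed inequality. I expect the main obstacle to be exactly this closing arithmetic: the estimate is lossy in several places simultaneously (discarding the factor $1-\tfrac{\epsilon}{k}$, the floors hidden in $M$, the $o(n)$ term), and the total margin is only $\Theta(\ell)$, so one must keep careful track of the floors, of the quantity $m-M$ coming from the maximality of $\ell$, and of the sign of the $\tfrac{\epsilon}{k}$–term; the choice $c_i=k+2-i$ together with the value $\tfrac12 k(k+3)$ of the target is precisely what makes the constants line up.
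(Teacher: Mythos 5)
Your proof is correct, but it reaches the bound by a different mechanism than the paper. The paper argues by contradiction in one shot: assuming $e(V_1,V_2)$ exceeds the claimed bound, it deletes only the $\ell$ cycles from Claim~\ref{Claim:I-cycles}, uses the crude estimate $\lceil c_i/2\rceil\le(k+3-i)/2$ to show the bipartite remainder still has at least $\frac{(k-\ell)(k-\ell+3)}{2}n$ edges, and applies Lemma~\ref{LEM:Kss} once with the large parameter $s=\frac{(k-\ell)(k-\ell+3)}{2}=2+3+\cdots+(k-\ell+1)$, so that a single $K_{s,s}$ already contains all the missing cycles of lengths $4,6,\dots,2(k-\ell+1)$. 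You instead re-run the ``maximal collection'' mechanism from the proof of Claim~\ref{Claim:I-m}: greedily extract as many further cycles as possible, use the nonexistence of the next one together with Lemma~\ref{LEM:Kss} (small parameter, $\delta=\tfrac12$) to bound $e(R_1,R_2)$, and then add back the degrees of the deleted vertices, distinguishing special from non-special vertices of $V_2$. Your route costs more bookkeeping (monotonicity in $s$, the exact value of $M$, the sign of the $\epsilon/k$-coefficient), but your closing identity is right and recovers the same constant with a genuine margin of $\frac{\ell+1}{4}n$; the paper's route is shorter because the one large $K_{s,s}$ absorbs all the slack at once. Two small points to tighten: your ``$o(n)$'' for the degrees of the $O(k^2)$ deleted $V_1$-vertices is really $O(k^2)\cdot n/\log_2 n\le\frac{3n}{32k}$ (here $n$ is merely at least $2^{32k^3}$, not tending to infinity), which is still comfortably below the $\frac{\ell+1}{4}n\ge\frac{3}{2}n$ margin; and the case $\ell=k$, where $C_1,\dots,C_k$ already give the contradiction, should be dispatched before invoking $c_{\ell+1}$ in the inequality $m<M+\lceil(k+1-\ell)/2\rceil$, exactly as the paper's ``we may assume $\ell\le k-1$''.
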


\begin{proof}[Proof of Claim \ref{Claim:I-e12}.]
Suppose for a contradiction that $e(V_1,V_2)> \frac{1}{2}(k^2+3k)n -\frac{\ell(2k+1-\ell)}{4}n $.
By Claim \ref{Claim:I-cycles}, $G$ contains $\ell$ disjoint cycles $C_1,C_2,...C_\ell$ with $|C_i|=2c_i$ and $|V(C_i)\cap V_2| \leq \lceil c_i/2\rceil$.
We may assume $\ell\leq k-1$.
Let $R_i=V_i-V(C_1) \cup V(C_2) ...\cup V(C_\ell)$ for $i\in \{1,2\}$. Then
\begin{equation*}
\begin{split}
e(R_1,R_2) &\geq e(V_1,V_2) - \sum_{i=1}^{\ell}\lceil c_i/2\rceil n-\sum_{i=1}^\ell 2c_i \cdot n/\log_2 n\\
&\geq e(V_1,V_2)-\sum_{i=1}^\ell \frac{k+3-i}{2} n= e(V_1,V_2) - \frac{\ell(2k+5-\ell)}{4}n\geq \frac{(k-\ell)(k-\ell+3)}{2}n
\end{split}
\end{equation*}
Let $s=\frac{(k-\ell)(k-\ell+3)}{2}$.
As $|R_1|\geq |R_2|^s$, by applying Lemma \ref{LEM:Kss} on $(R_1,R_2)$ with $\delta=1$,
we see that $(R_1,R_2)$ contains a copy of $K_{s,s}$ and thus $k-\ell$ disjoint cycles of lengths $4,6,...,2(k-\ell+1)$.
Together with the cycles $C_1,...,C_\ell$ as above, $G$ contains $k$ cycles of consecutive even lengths, a contradiction.
\end{proof}

Now we are ready to reach the final contradiction. By \eqref{equ:V2} and Claim \ref{Claim:I-e12}, we have
$$\frac{1}{2}(k^2 +3k +2)n \leq e(G)= e(V_1)+e(V_2)+e(V_1,V_2)$$
$$\leq \left(5/8+ \frac{\epsilon}{k}(8k+2)\right)n +n/8+\frac{1}{2}(k^2+3k)n - \frac14\ell(2k+1-\ell)n.$$
Using $\frac{\epsilon}{k}<1/4$, it implies that
$1+\ell(2k+1-\ell)\leq\frac{4\epsilon}{k}(8k+2)<8k+2.$
Since $k\geq \ell\geq 5$, we further have $1+5(2k-4)\leq 1+\ell(2k+1-\ell)<8k+2$.
This contradicts $k\geq 19$,\footnote{If we use that $e(G)\geq \frac{1}{2}(k^2+3k-2)n$ instead,
then the same analysis yields that $-7+5(2k-4)<8k+2$ and thus $k\leq 14$. So it also contradicts $k\geq 19$.}
completing the proof of Subsection \ref{subs:3.1}.
\qed

\subsection{$e(G[V_1])>(2k+1) n$}\label{subs:3.2}
Set $e(G[V_1]):=\left(\frac{2k+1}{2}+ \frac{\epsilon(8k+1)}{2}+\frac{1}{8}\right)n$.
By \eqref{equ:V1}, the average degree of $G[V_1]$ is at most $8k+1$,
so we have $\frac{1}{4}<\frac{8k+3}{4(8k+1)}<\epsilon<\frac{3k-\frac{1}{8}}{4k+\frac{1}{2}}<\frac{3}{4}$.

Suppose that $e(V_1,V_2)\leq \frac{1}{2}(k^2+3k)(1-\epsilon)n + 6\epsilon n$.
By \eqref{equ:V2}, we have
\begin{equation*}
\begin{split}
\frac{1}{2}(k^2 +3k +2)n&\leq e(G)=e(V_1)+e(V_2)+e(V_1,V_2)\\
&\leq \left(\frac{2k+1}{2}n + \frac{1}{2}\epsilon n(8k+1) +\frac{n}{8}\right) + \frac{n}{8} +\left((1-\epsilon)n\frac{1}{2}(k^2+3k) + 6\epsilon n\right)
\end{split}
\end{equation*}
Using $\epsilon>1/4$, we can get that
$2k-\frac{1}{2}\geq \epsilon(k^2-5k-13)>\frac{1}{4}(k^2-5k-13)$.
This implies that $k^2 -13k-11 \leq 0,$
a contradiction to $k\geq 19$.\footnote{If we use $e(G)\geq \frac{1}{2}(k^2+3k-2)n$ here instead,
then the same calculations give that $k^2-13k-27\leq 0$, which yields that $k\leq 16$.  So it also contradicts $k\geq 19$.}

Therefore, we have $e(V_1,V_2)>\frac{1}{2}(k^2+3k)(1-\epsilon)n + 6\epsilon n$.
Let $M$ denote the set of vertices $u\in V_2$ satisfying $|N(u)\cap V_1|>(1-\epsilon)n$. Let $m=|M|$.

We assert that $G[V_1,V_2)$ contains $m$ disjoint cycles of lengths $2k+2,2k,...,2k+4-2m$ such that any of them uses exactly one vertex in $V_2$ which is in $M$.
For any $u\in M$, let $A_u = N(u)\cap V_1$ and $B_u = V_1\setminus A_{u}$.
Suppose that $e[A_u,B_u)\leq \frac{2k+1}{2}n$.
By Lemma \ref{LEM:main}, we have $e(B_u)\leq \frac{1}{2}\epsilon n(8k+1)$.
This shows that
\begin{align}\label{equ:3.2}
e(G[V_1])\leq \frac{2k+1}{2}n + \frac{1}{2}\epsilon(8k+1)n= e(G[V_1]) - \frac{n}{8},
\end{align}
a contradiction.
So $e[A_u,B_u)> \frac{2k+1}{2}n$.
By the celebrated Erd\H{o}s-Gallai Theorem (see \cite{Erdos}),
$G[A_u,B_u)$ contains a cycle $D$ of length at least $2k+2$.
We first claim that $D$ contains a path of even length at least $2k$ with both endpoints in $A_u$.
To see this, if $D$ is odd, then clearly there exists an edge $xy\in E(D)$ with $x,y\in A_u$ and so $D-xy$ is such a path;
otherwise $D$ is even, then we have either $V(D)\subseteq A_u$ or $V(D)\cap B_u\neq \emptyset$ and in either case, we can find such a path easily.
Let $P= v_0v_1...v_\ell$ be such a path, where $\ell\geq 2k$ is even and $v_0,v_\ell\in A_u$.
If $v_1 \in B_u$, let $P'= v_2...v_\ell$; if $v_{\ell-1}\in B_u$, let $P'= v_0v_1...v_{\ell-2}$;
otherwise $v_1,v_{\ell-1}\in A_u$, let $P'= v_1...v_{\ell-1}$.
So $P'$ is a path of length $|P|-2$ with both endpoints in $A_u$.
Keeping this process, we can find a path $P_0\subseteq G[V_1]$ of length exactly $2k$ with both endpoints $x_0,y_0\in A_u$.
In this way, we can get a desired cycle $C_1:=P_0\cup x_0uy_0$ of length $2k+2$ in $G[V_1,V_2)$.
Now suppose we have obtained desired disjoint cycles $C_1,...,C_{i-1}$ for some $i\leq m$.
Since $\frac{n}{8}>|\cup_{j=1}^{i-1} V(C_j)|\cdot n/ \log_2n$,
by considering $G[V_1,V_2)-\cup_{j=1}^{i-1} V(C_j)$, we also can get a contradiction in the analog of \eqref{equ:3.2}
and then the same arguments enable us to find a desired cycle $C_i$ of length $2k+4-2i$.
This proves our assertion, that is, $G[V_1,V_2)$ contains $m$ disjoint cycles of lengths $2k+2,2k,...,2k+4-2m$
such that each of them uses exactly one vertex in $V_2$ which is in $M$.
Let $X$ be the union of vertex-sets of these $m$ cycles.

Let $C_1,C_2,...,C_t$ be a maximal collection of $t$ disjoint cycles in $G(V_1,V_2)-X$ with $|C_i|=2(k+2-m-i)$ for each $i\in [t]$.
Clearly we may assume that $t<k-m$, as otherwise, together with the above $m$ disjoint cycles,
there exist $k$ disjoint cycles of consecutive even lengths in $G$.
Also by our choice, $R:= G(V_1,V_2)-X\cup V(C_1)\cup ...\cup V(C_t)$ doesn't contain cycle of length $2(k-m-t+1)$.
Using \eqref{equ:V1} and $\frac{1}{4}<\epsilon<\frac{3}{4}$, it follows that
\begin{equation*}
\begin{split}	
e(R)&\geq e(V_1,V_2) - mn-\frac{1}{2}(2k+3-2m-t)t(1-\epsilon)n-(|X|+|C_1|+...+|C_t|)\cdot n/\log_2 n\\
&\geq \left(\frac{1}{2}(k^2+3k)(1-\epsilon)n + 6\epsilon n\right)-mn- \frac{1}{2}(2k+3-2m-t)t(1-\epsilon)n-\epsilon n\\
&=\frac{1}{2}((k-t)(k-t+3)+2mt)(1-\epsilon)n+5\epsilon n-mn\geq \left(\frac{1}{8}(k-t)(k-t+3)+\frac54-m\right)n.
\end{split}
\end{equation*}
Since $k-t\geq 1$, this implies that $e(R)\geq (k-m-t+\frac{1}{2})n$.
By Lemma \ref{LEM:Kss} (with $s=k-m-t+1$ and $\delta=1/2$), $R$ contains a copy of $K_{s,s}$ and thus a cycle of length $2s=2(k-m-t+1)$, a contradiction.
The proof of Theorem \ref{THM: main} now is completed.
\qedB

\section{Proof of Theorem \ref{THM:largek}}
The proof will be analogous to the one of Theorem \ref{THM: main}.
We shall only give detailed arguments for which different from Theorem \ref{THM: main} and sketch for the others.

Let $\epsilon>0$ be any real and $k\geq k_\epsilon$ be sufficiently large.
Let $G$ be a graph of sufficiently large order $n\geq n_k$ and average degree at least $k^2 +3k -2+\epsilon$.
Assume that $G$ doesn't contain $k$ disjoint cycles of consecutive even lengths.

Using the exactly same arguments as in the proof of Theorem \ref{THM: main},
we can get a partition $V(G)=V_1\cup V_2$ of $G$ satisfying \eqref{equ:V2} and \eqref{equ:V1}.

It is important to point out that in case that \eqref{equ:e1} holds (i.e., $e(G[V_1])>\frac{7}{8}n$),
we have put explanations in footnotes of Subsections \ref{subs:3.1} and \ref{subs:3.2} whenever the reasoning $e(G)\geq \frac12(k^2+3k+2)n$ was used therein,
by justifying that even under the weaker assumption $e(G)\geq \frac12(k^2+3k-2)n$,
the same arguments of Subsections \ref{subs:3.1} and \ref{subs:3.2} would also work (for $k\geq 150$ for instance).
So it would suffice to only consider that $e(G[V_1])\leq \frac{7}{8}n$ here.

Suppose that $e(G[V_1])<\frac{\epsilon}{4}n$.
Since $n$ is sufficiently large, the estimate of \eqref{equ:V2} can be improved to $e(G[V_2])\leq \frac{\epsilon}{8}n$.
Then we have
\begin{equation*}
e(V_1,V_2)\geq \frac{1}{2}(k^2+3k-2 +\epsilon)n-\frac{\epsilon}{4}n-\frac{\epsilon}{8}n=\left(\frac{k^2+3k}{2}-1+\frac{\epsilon}{8}\right)n.
\end{equation*}
By Lemma \ref{LEM:Kss}, $G(V_1,V_2)$ contains a $K_{s,s}$ with $s=\frac{1}{2}(k^2+3k)$.
Hence $G$ contains $k$ disjoint cycles of lengths $4,6,...,2k+2$, a contradiction.

It remains to consider that $\frac{\epsilon}{4}n \leq e(G[V_1])\leq \frac78n$.
Let $M$ be the set of all vertices $v\in V_2$ satisfying that $|N(v)\cap V_1| > (1-\frac{1}{k\sqrt{k}})n$ and let $m=|M|$.

Suppose $m\leq 2k$. Let $r:=\sum_{i=\lfloor \frac{k}{2}\rfloor}^{k+1}i\approx 3k^2/8$.
Using the bounds on $e(G[V_1])$ and $e(G[V_2])$, it is easy to get that $e(V_1,V_2)\geq r|V_1|$.
By Lemma \ref{LEM:Kss}, $G(V_1,V_2)$ contains a copy of $K_{r,r}$, which gives disjoint cycles of lengths $2k+2,2k,...,2 \lfloor \frac{k}{2} \rfloor$.
Let $R$ be obtained from $G(V_1,V_2)$ by deleting the vertices of these cycles.
For sufficient large $k$ and $n$, we have
\begin{equation*}
e(R)\geq e(G)-e(G[V_1])-e(G[V_2])-mn-(r-m)(1-1/k\sqrt{k})n - r|V_2| \geq \frac{1}{2}(k^2+3k-2r)n
\end{equation*}
By Lemma \ref{LEM:Kss}, $R$ contains a copy of $K_{s,s}$ with $s=\frac{1}{2}(k^2+3k)-r=2+3+...+(\lfloor \frac{k}{2}\rfloor-1)$.
Putting the above together, $G$ contains $k$ disjoint cycles of lengths $4,6,...,2k+2$, a contradiction.

So we have $m\geq 2k+1$.
We claim that there exists a cycle $C$ of length $2k$ or $2k+2$ in $G$ which uses at most $\ell:=\lfloor\frac{2k+2}{3}\rfloor +2$ vertices in $V_2$.
Fix $\ell$ vertices $v_1,v_2,...v_\ell$ in $V_2$ with $|N(v_i)\cap V_1|> (1-\frac{1}{k\sqrt{k}})n$.
Consider any $i,j\in [\ell]$. Let $A_{i,j} = N(v_i) \cap N(v_j) \cap V_1$ and $B_{i,j} = V_1\setminus A_{i,j}$.
So $|A_{i,j}|\geq(1-\frac{2}{k\sqrt{k}}) n$ and $|B_{i,j}|\leq \frac{2n}{k\sqrt{k}}$.
Since $G[B_{i,j}]$ does not contain $k$ cycles of consecutive even lengths, the shortest of which has length at most $2\log_{1+1/4k}n +2$.
By Lemma \ref{LEM:main}, $e(B_{i,j})\leq (4k+1/2)|B_{i,j}|\leq \frac{8k+1}{k\sqrt{k}}n$. Then for sufficiently large $k$, we have
\begin{equation*}
e(A_{i,j}) + e(A_{i,j},B_{i,j}) -|B_{i,j}|= e(G[V_1]) -e(B_{i,j}) -|B_{i,j}|\geq \frac{\epsilon}{4} n - \frac{8k+1}{k\sqrt{k}}n\geq \frac{\epsilon}{5} n.
\end{equation*}
So either $e(A_{i,j})\geq \epsilon n/10$ or $e(A_{i,j},B_{i,j})\geq |B_{i,j}|+\epsilon n/10$.
We call the pair $\{i,j\}$ {\it type I} in the former case and {\it type II} otherwise.
In view of \eqref{equ:V1}, after excluding any $2k$ vertices in $V_1$,
one can still find a path $P_{i,j}:=v_ixyv_j$ of length three with $x, y\in A_{i,j}$ if $\{i,j\}$ has type I,
and a path $Q_{i,j}:=v_ixyzv_j$ of length four with $x,z\in A_{i,j}$ and $y\in B_{i,j}$ if $\{i,j\}$ has type II.
For each $i\geq 1$, let $r_i$ be the maximum even integer not exceed the number of type I pairs $\{j,j+1\}$ for $1\leq j\leq i$,
and let $s_i$ be the number of type II pairs $\{j,j+1\}$ for $1\leq j\leq i$.
Denote $\alpha$ to be the minimum integer with $3r_\alpha+4s_\alpha\geq 2k-2$.
Since $3r_\alpha+4s_\alpha$ is even, one can infer that $3r_\alpha+4s_\alpha=2k-2$ or $2k$.
This provides a path $L$ between $v_1$ and some vertex say $v_\beta$ of length $2k-2$ or $2k$,
consisting of $r_\alpha$ many paths $P_{j,j+1}$ and $s_\alpha$ many paths $Q_{j,j+1}$ and using at most $\ell$ vertices in $V_2$.
Now it is easy to build the desired cycle $C$ by just adding a path between $v_1$ and $v_\beta$ of length two to the path $L$.

Let $R$ be the graph form $G(V_1,V_2)$ by deleting $V(C)$.
Then as $k$ is large, we have
\begin{align*}
e(R)\geq e(G)-e(G[V_1])-e(G[V_2])-\ell n - (2k+2)|V_2|\geq \frac{1}{2}(k^2+k)n
\end{align*}
By lemma~\ref{LEM:Kss}, $R$ contains a copy of $K_{s,s}$ with $s=\frac{1}{2}(k^2+k)$.
In either case that $|C|=2k$ or $|C|=2k+2$,
this together with the cycle $C$ can provide $k$ disjoint cycles of lengths $4,6,...,2k+2$.
We have finished the proof of Theorem \ref{THM:largek}.
\qedB

\medskip

We remark that in the current proof it is enough to choose $k_\epsilon=c/\epsilon^2$ for some large absolute constant $c$.

\section{Concluding remarks}
Our main result shows that Conjecture \ref{conj} holds for $k\geq 19$ and graphs of large order.
By some very careful calculations, this perhaps can be improved from $19$ to a smaller number.
However we believe our approach will not success for all $k\geq 2$.
For this reason, the case $k=2$ seems to be of particular interest,
where the conjecture suggests that average degree at least 12 would force the existence of two disjoint cycles of consecutive even lengths.
Being not able to prove it, we show the following weaker bound for $k=2$.

\begin{thm}\label{THM: k=2}
For every real $\epsilon >0$, there exists a number $n_\epsilon$ such that the following holds.
If $G$ is a graph of order at least $n_\epsilon$ and average degree at least $14+\epsilon$,
then $G$ contains two disjoint cycles of consecutive even lengths.
\end{thm}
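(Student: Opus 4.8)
\medskip
\noindent\textbf{Proof sketch.}
The plan is to mimic the proof of Theorem~\ref{THM: main} with $k=2$, exploiting the sharper $k=2$ case of Lemma~\ref{LEM:main} at every step. First I would set $H:=G$ and repeatedly apply Lemma~\ref{LEM:main} --- which for $k=2$ needs only average degree at least $10+2\epsilon_0$, where $\epsilon_0$ is a small constant depending on $\epsilon$ --- to extract two cycles of consecutive even lengths of bounded length, recording the shorter length and deleting the two cycles; exactly as in the proof of Theorem~\ref{THM: main}, if two of the recorded lengths ever coincide then $G$ already contains two disjoint cycles of consecutive even lengths, so the procedure halts after $n^{o(1)}$ steps and produces a partition $V(G)=V_1\cup V_2$ with $|V_2|=n^{o(1)}$, $e(G[V_2])=o(n)$, $\Delta(G[V_1])\le n/\log_2 n$, and --- crucially using the $k=2$ bound in Lemma~\ref{LEM:main} instead of the general one --- $e(G[V_1])<(5+\epsilon_0)n$. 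Hence $e(V_1,V_2)=e(G)-e(G[V_1])-e(G[V_2])\ge \frac{1}{2}(14+\epsilon)n-(5+\epsilon_0)n-o(n)\ge(2+\frac{\epsilon}{4})n$ once $\epsilon_0$ is taken small enough. It is precisely the improvement of the leading coefficient of Lemma~\ref{LEM:main} from $8k$ to $5k$ at $k=2$ that makes average degree $14+\epsilon$ --- rather than a noticeably larger constant --- the natural target here.

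Next I would dispose of the sparse regime. If $e(G[V_1])\le(2+\frac{\epsilon}{4})n$, then $e(V_1,V_2)\ge 5n$ while $|V_1|\ge|V_2|^{5}$, so Lemma~\ref{LEM:Kss} (with $\delta=1$) gives a copy of $K_{5,5}$ inside $G(V_1,V_2)$, and $K_{5,5}$ contains disjoint cycles of lengths $4$ and $6$; done. (It is harmless to first move the $n^{o(1)}$ vertices of $V_1$ having largest degree within $V_1$ to the $V_2$ side: this either returns us to the sparse regime just treated, with the analogous bounds on the new partition, or leaves $G[V_1]$ with maximum degree at most $n^{0.9}$, say, which is convenient below.) So from now on $G[V_1]$ has $\Theta(n)$ edges, in particular average degree bounded away from $4$; by the Erd\H{o}s--Gallai theorem (see \cite{Erdos}) it contains paths of length $4$, and by Lemmas~\ref{cycle with chord} and~\ref{A-B path} it contains a cycle with a chord, hence cycles of several consecutive lengths.

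The heart of the matter is the remaining dense regime, which I would attack by a case analysis on $m:=|M|$, where $M:=\{v\in V_2:|N(v)\cap V_1|>(1-\eta)n\}$ for a suitable $\eta=\eta(n)\to 0$ (e.g.\ $\eta=n^{-0.9}$). If $m$ is at least a suitable absolute constant, I would build the two disjoint cycles explicitly: a $4$-cycle $v_1xv_2yv_1$ through two vertices $v_1,v_2\in M$, using two vertices $x,y$ of the common neighbourhood $N(v_1)\cap N(v_2)\cap V_1$ (which misses only $O(\eta n)$ vertices of $V_1$), together with a disjoint $6$-cycle $v_3q_0q_1q_2q_3q_4v_3$ through a further vertex $v_3\in M$, where $q_0q_1q_2q_3q_4$ is a path of length $4$ in $G[V_1]-\{x,y\}$ with $q_0,q_4\in N(v_3)$; such a path is obtained from Erd\H{o}s--Gallai inside $G[V_1]-\{x,y\}$ after discarding the $O(\eta n)$ vertices outside $N(v_3)$, using that at most $\eta n\cdot\Delta(G[V_1])$ edges of $G[V_1]$ meet $V_1\setminus N(v_3)$ and that $\Delta(G[V_1])\le n^{0.9}$ --- this is the point at which one wants $e(G[V_1])$ comfortably above $\frac{3}{2}|V_1|$, so the range where $e(G[V_1])$ is only slightly above $2n$ must be split off and handled by a small-cycle-plus-deletion argument as in the proof of Theorem~\ref{THM:largek}. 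If instead $m$ is below that constant, then only boundedly many vertices of $V_2$ nearly dominate $V_1$, and here I would combine a counting bound on $e(V_1,V_2)$ in the spirit of Subsections~\ref{subs:3.1} and~\ref{subs:3.2} with the explicit construction, following the proof of Theorem~\ref{THM:largek}, of a single short even cycle $C$ using only $O(1)$ vertices of $V_2$, then delete $V(C)$ and apply Lemma~\ref{LEM:Kss} to the surviving bipartite graph of the form $G(\cdot,\cdot)$ to obtain a cycle of the neighbouring even length.

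The main obstacle is closing the numerical estimates. The argument proving Theorem~\ref{THM: main} needs $k\ge 19$ because the window $\frac{5k}{2}<m<\frac{(k+3)k}{2}$ forced on the number of near-dominating vertices in Claim~\ref{Claim:I-m} collapses for $k=2$, so one cannot simply specialise that argument --- which is exactly why only the weaker hypothesis $14+\epsilon$, rather than the conjectured $12$, is obtained. The delicate sub-case is the one where $V_2$ has only a few near-dominating vertices but many vertices of intermediate degree $\Theta(n/\mathrm{polylog}\,n)$ into $V_1$: there one must guarantee that, after deleting a constant-size cycle, the surviving bipartite graph still has average degree at least the value demanded by Lemma~\ref{LEM:Kss} towards a side of size at least $|V_2|^{s}$, and it is in this balancing that essentially all of the slack between $12$ and $14$ is spent; I expect this to be the main technical cost of the argument.
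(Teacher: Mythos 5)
Your setup matches the paper's: the extraction process producing $V(G)=V_1\cup V_2$, the use of the sharper $k=2$ bound of Lemma~\ref{LEM:main} to get $e(G[V_1])\le(5+o(1))n$, hence $e(V_1,V_2)\ge(2+\epsilon/4)n$, and the quick disposal of the regime where $e(G[V_1])$ is small. But you do not close the case you yourself flag as delicate --- few vertices of $V_2$ nearly dominating $V_1$ but many of intermediate degree --- and this is a genuine gap, not a routine balancing act. Concretely: with $e(V_1,V_2)$ only slightly above $2n$, deleting from $G(V_1,V_2)$ a six-cycle that uses even one vertex of $V_2$ with, say, $0.9n$ neighbours in $V_1$ can destroy more than $2n$ edges, so no fixed threshold $\eta$ in your definition of $M$ makes the ``delete $C$ and apply Lemma~\ref{LEM:Kss}'' step survive in the small-$m$ case. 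Moreover your parameter choices in the large-$m$ case do not balance: with $\eta=n^{-0.9}$ and $\Delta(G[V_1])\le n^{0.9}$ the quantity $\eta n\cdot\Delta(G[V_1])$ is of order $n$, so after discarding $V_1\setminus N(v_3)$ you cannot guarantee $e$ stays above the Erd\H{o}s--Gallai threshold $\frac32|V_1|$; shrinking $\eta$ to fix this makes the small-$m$ case hopeless.

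The paper avoids the dichotomy on $|M|$ entirely, via two observations you are missing. First, since $e(V_1,V_2)\ge(2+\epsilon/4)n$ yields a $K_{3,3}$ and hence (non-disjoint) copies of $C_4$ and $C_6$ in $G$, the very first step of the extraction process already placed a $C_6$ inside $V_2$; therefore $G[V_1]$ contains no $C_4$ at all. Second, instead of a threshold, take $u\in V_2$ with the maximum number $(1-\alpha)n$ of neighbours in $V_1$ and prove the $\alpha$-dependent bound $e(G[V_1])\le(5+\epsilon/2)\alpha n+n+\frac32(1-\alpha)n$: if it failed then, writing $A=N(u)\cap V_1$ and $B=V_1\setminus A$, either $e(G[A])>\frac32|A|$ (Erd\H{o}s--Gallai gives a path on five vertices, closed through $u$ into a $C_6$) or $e(A,B)>n$, in which case $C_4$-freeness of $G[V_1]$ forces an even cycle of length at least six in $G(A,B)$; either way the resulting $C_6$ together with a $C_4$ from Lemma~\ref{LEM:Kss} in what remains of $G(V_1,V_2)$ is a contradiction. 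The payoff is that deleting a $C_6\subseteq G(V_1,V_2)$ costs at most $3(1-\alpha)n+3|V_2|$ edges of $(V_1,V_2)$, and the two $\alpha$-terms cancel against the bound on $e(G[V_1])$ to leave $(1+\epsilon/4)n$ edges for every $\alpha\in[0,1]$. Without the $C_4$-freeness of $G[V_1]$ and this linear-in-$\alpha$ trade-off, the slack between $12$ and $14$ is genuinely insufficient to run your scheme.
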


{\noindent \it Proof.} Let $n_\epsilon$ be sufficiently large and $G$ be a graph with order $n\geq n_\epsilon$ and $e(G)\geq (7+\epsilon)n$.
Assume that $G$ does not contain two disjoint cycles of consecutive even lengths.
The proof is similar to the previous ones.
The same as the proof of Theorem \ref{THM: main}, we can find a partition $V(G)=V_1\cup V_2$
such that $G[V_1]$ does not contain two disjoint cycles of consecutive even lengths at most $2\log_{1+\epsilon/4} n+4$,
and moreover, $e(G[V_2])\leq |V_2|^2/2\leq n^{\frac{1}{10}}$.

By Lemma \ref{LEM:main}, we have $e(G[V_1])\le (5+\epsilon/2)n$.
So $e(V_1,V_2)\geq (2+\epsilon/4)n$.
Then by Lemma \ref{LEM:Kss}, $G(V_1,V_2)$ contains a copy of $K_{3,3}$.
This shows that there exist copies of $C_4$ and $C_6$ in $G$.
Recall the definition of the sequence $G:=G_0\supseteq G_1 \supseteq ...\supseteq  G_m$,
where at each time, two consecutive even cycles with minimum lengths in $G_i$ would be put in $V_2$.
This means that $G[V_2]$ must contain a copy of $C_6$. So $G[V_1]$ cannot contain any copy of $C_4$.

Let $u\in V_2$ be a vertex with the maximum number of neighbors in $V_1$.
Let $A= N(u)\cap V_1$ and $B=V_1\setminus A$ with $|A|=(1-\alpha)n$ .

We claim that $e(G[V_1])\leq (5+\epsilon/2)\alpha n+n+\frac{3}{2}(1-\alpha)n$.
Suppose for a contradiction that $e(G[V_1])>(5+\epsilon/2)\alpha n+n+\frac{3}{2}(1-\alpha)n$.
By Lemma \ref{LEM:main}, $e(G[B])\le(5+\epsilon/2)\alpha n$.
Then either $e(G[A])> \frac{3}{2}|A|$ or $e(A,B)> n$.
If $e(A)> \frac{3}{2}|A|$, by Erd\H{o}s-Gallai Theorem \cite{Erdos}, $G[A]$ contains a path on five vertices;
otherwise $e(A,B)> n$, $G(A,B)$ contains an even cycle of length at least six (as $G[V_1]$ has no four-cycle).
So in either case, $G[V_1\cup\{u\}]$ contains a cycle $C$ of length six.
Let $R$ be obtained from $G(V_1,V_2)$ by deleting $V(C)$.
Then $e(R)\geq e(V_1,V_2)-n-5|V_2|\geq (2+\epsilon/4)n-n-5|V_2|\geq (1+\epsilon/5)n$.
By Lemma \ref{LEM:Kss}, $R$ contains a copy of $C_4$,
which together with the cycle $C$ give a contradiction.
This proves the claim.

Recall that $G(V_1,V_2)$ contains a cycle $D$ of length six.
Let $R'$ be obtained from $G(V_1,V_2)$ by deleting $V(D)$.
By the claim, we have
\begin{equation*}
\begin{split}
e(R')&\geq e(G)-e(G[V_1])-e(G[V_2])- 3(1-\alpha)n- 3|V_2|\\
&\geq \left(\frac{3+2\epsilon}{2}-\frac{1+\epsilon}{2}\alpha\right) n-n^{1/2}\geq (1+\epsilon/4)n.
\end{split}
\end{equation*}
By Lemma \ref{LEM:Kss}, $R'$ contains a copy of $C_4$, again a contradiction.
This finishes the proof.
\qedB

\medskip

To conclude, we would like to ask whether for any $k\geq 2$ and any real $\epsilon>0$,
there exists $n_{k,\epsilon}$ such that every graph of order at least $n_{k,\epsilon}$ and average degree at least $k^2+3k-2+\epsilon$ contains $k$ disjoint cycles of consecutive even lengths.


\begin{thebibliography}{99}

\bibitem{BS}
J. Bondy and M. Simonovits,
\newblock{Cycles of even length in graphs},
\newblock{\emph{J. Combin. Theory Ser. B}} \textbf{16} (1974), 97--105.


\bibitem{CH}
K. Corradi and A. Hajnal,
\newblock{On the maximal number of independent circuits of a graph},
\newblock{\emph{Acta Math. Acad. Sci. Hungar}} \textbf{14} (1963), 423--443.


\bibitem{E96}
Y. Egawa,
\newblock{Vextex-disjoint cycles of the same length},
\newblock{\emph{J. Combin. Theory Ser. B}} \textbf{66} (1996), 168--200.


\bibitem{Erdos}
P. Erd\H{o}s and T. Gallai,
\newblock{On maximal paths and circuits of graphs},
\newblock{\emph{Acta Math. Acad. Sci. Hungar}} \textbf{10} (1959), 337--356.


\bibitem{HS}
R. H\"aggkvist and A. Scott,
\newblock{Arithmetic progressions of cycles},
\newblock{\emph{Technical Report}} No. 16 (1998), Matematiska Institutionen, Ume\r{a}Universitet.


\bibitem{KST}
T. K\H{o}v\'ari, V. S\'os and P. Tur\'an,
\newblock{On a problem of K. Zarankiewicz},
\newblock{\emph{Colloquium Math.}} \textbf{3} (1954), 50--57.


\bibitem{T83}
C. Thomassen,
\newblock{Girth in graphs},
\newblock{\emph{J. Combin. Theory Ser. B}} \textbf{31} (1983), 129--141.


\bibitem{V00}
J. Verstra\"ete,
\newblock{On arithmetic progressions of cycle lengths in graphs},
\newblock{\emph{Combin. Probab. Comput.}} \textbf{9} (2000), 369--373.


\bibitem{V02}
J. Verstra\"ete,
\newblock{A note on vertex-disjoint cycles},
\newblock{\emph{Combin. Probab. Comput.}} \textbf{11} (2002), 97--102.

\bibitem{V03}
J. Verstra\"ete,
\newblock{Vertex-disjoint cycles of the same length},
\newblock{\emph{J. Combin. Theory Ser. B}} \textbf{88} (2003), 45--52.


\end{thebibliography}
\end{document}